            \DeclareFontFamily{U}{wncy}{}
            \DeclareFontShape{U}{wncy}{m}{n}{%
               <5>wncyr5%
               <6>wncyr6%
               <7>wncyr7%
               <8>wncyr8%
               <9>wncyr9%
               <10>wncyr10%
               <11>wncyr10%
               <12>wncyr6%
               <14>wncyr7%
               <17>wncyr8%
               <20>wncyr10%
               <25>wncyr10}{}
\newtheorem{thm}{Theorem}[section]
\newtheorem{lem}[thm]{Lemma}
\newtheorem{cor}[thm]{Corollary}
\newtheorem{prop}[thm]{Proposition}
\theoremstyle{definition}
\newtheorem{remark}{Remark}
\newtheorem{example}{Example}
\newcommand{\R}{\mathbb R}
\newcommand{\N}{\mathbb N}
\newcommand{\Z}{\mathbb Z}
\newcommand{\Q}{{\mathbb Q}}
\newcommand{\C}{\mathbb C}
\def\al{\alpha}
\def\la{\lambda}
\begin{document}

\raggedbottom

\title[Action of $H(\lambda_4)$ on Imaginary Fields]{The Hecke group $H(\lambda_4)$ acting on imaginary quadratic number fields}

\author{Abdulaziz Deajim}
\address[]{Department of Mathematics, King Khalid University,
P.O. Box 9004, Abha, Saudi Arabia} \email{deajim@kku.edu.sa, deajim@gmail.com}

\keywords{imaginary quadratic field, Hecke group, orbit}
\subjclass[2010]{20G40, 05C25, 11A25, 12F05}
\date{\today}

\begin {abstract}
Let $H(\lambda_4)$ be the Hecke group $\langle x,y\,:\, x^2=y^4=1 \rangle$ and, for a square-free positive integer $n$, consider the subset $\mathbb{Q}^*(\sqrt{-n})=\left\{(a+\sqrt{-n})/c \, | \, a,b=(a^2+n)/c \in \mathbb{Z},\, c\in 2\mathbb{Z} \right\}$ of the quadratic imaginary number field $\mathbb{Q}(\sqrt{-n})$. Following a line of research in the relevant literature, we study properties of the action of $H(\lambda_4)$ on $\mathbb{Q}^*(\sqrt{-n})$. In particular, we calculate the number of orbits arising from this action for every such $n$. Some illustrative examples are also given.
\end {abstract}
\maketitle

\section{{\bf Introduction}}\label{intro}
\subsection{Preliminaries}
For a positive real number $\la$, E. Hecke intoduced in \cite{Hecke} the group $H(\la)$ generated by the linear transformations $x: z\mapsto -1/z$ and $w: z\mapsto z+\la$. He further showed that if $\la=\la_k=2\cos(\pi/k)$ for an integer $k\geq 3$, then $H(\la_k)$ is Fuchsian (i.e. a discrete subgroup of $\mbox{PSL}(2,\R)$) (see \cite{Hecke} and \cite{SBC}). The groups $H(\la_k)$ are justifiably called {\it Hecke groups}. Letting $y=xw:z\mapsto -1/(z+\la_k)$, it can be shown that $H(\la_k)$ is generated by $x$ and $y$, so $H(\la_k)=\langle x,y\,:\, x^2=y^k=1 \rangle$, and $H(\la_k)$ is isomorphic to a free product $C_2 \ast C_k$ of the cyclic groups of orders 2 and $k$ (see \cite{CD}). Such a presentation of $H(\la_k)$ is by no means unique. The actions of Hecke groups on many discrete structures play important roles in various branches of mathematics (see \cite{CD} for more details).

For $k=3$, it is known (see \cite{HM}) that $H(\la_3)$ is the modular group $\mbox{PSL}(2, \Z)$. The action of the modular group on real quadratic number fields was extensively studied, see for instance \cite{HMM}, \cite{MZ}, \cite{M1}, \cite{M2}, and \cite{MAY}. On the other hand, some aspects of the action of the modular group (resp. a particular subgroup of the modular group) on imaginary quadratic number fields were also studied, see \cite{AD} (resp. \cite{AM}).

For $k=4$, we set $H=H(\la_4)$ in this paper. Following \cite{MA}, we choose the following presentation for $H$:
$$\mbox{$H=\langle x,y\,:\, x^2 = y^4 =1 \rangle$ with $x:z \mapsto \cfrac{-1}{2z}$ and $y: z \mapsto \cfrac{-1}{2(z+1)}$}\cdot$$
The action of $H$ on real quadratic number fields was also studied (see for instance \cite{MHM} and \cite{MA}). In this paper, we consider the action of $H$ on {\it imaginary} quadratic number fields. In particular, for a square-free positive integer $n$, we study the action of $H$ on the following subset of $\Q(\sqrt{-n})$:
$$\Q^*(\sqrt{-n}):=\left\{\,\frac{a+\sqrt{-n}}{c}\in \Q(\sqrt{-n})\; | \; a,b=\frac{a^2+n}{c} \in \Z,\, c\in 2\Z \,\right\}.$$
We observe that the set $\Q^*(\sqrt{-n})$ is closed under complex conjugation since, for \linebreak $\al=\cfrac{a+\sqrt{-n}}{c}\in \Q^*(\sqrt{-n})$, we have $\overline{\al}=\cfrac{-a+\sqrt{-n}}{-c}$. Furthermore, such an $\al$ and its conjugate are roots of the quadratic polynomial $ct^2-2at +b$ with discriminant $4a^2-4bc=-4n$. In fact, there is a two-to-one correspondence from the set $\Q^*(\sqrt{-n})$ to the set of such polynomials.

\subsection{Summary of Contributions}
After displaying and proving some interesting preliminary results in Sections 2 and 3 on the action of $H$ on $\Q^*(\sqrt{-n})$, we utilize them to prove the main result of the paper (Theorem \ref{main}), which aims at precisely calculating the number of orbits resulting from this action. We present in Section 4 some examples to illustrate various computational aspects that arise when applying Theorem \ref{main} to concrete cases.
%%%%%%%%%%%%%%%%%%%%%%%%%%%%%%%%%%%%%%%%%%%%%%%%%%%%%%

\section{Lemmas}

Throughout the paper, unless a particular value of $n$ is specified, $n$ denotes a square-free positive integer.

\begin{lem}\label{a,n,b,c}
For $\cfrac{a+\sqrt{-n}}{c}\in \Q^*(\sqrt{-n})$, we have:
\begin{itemize}
\item[(i)] $a$ is odd if and only if $n$ is odd.
\item[(ii)] $b$ and $c$ have the same sign.
\end{itemize}
\end{lem}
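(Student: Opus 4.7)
The plan is to extract everything from the single identity $bc = a^2 + n$, which is just a rearrangement of the definition $b = (a^2+n)/c$, combined with the two standing hypotheses $n \geq 1$ (positive, square-free) and $c \in 2\Z$.

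For part (i), I would argue by parity. Since $c$ is even, the product $bc$ is even, so the right-hand side $a^2 + n$ must be even as well. Because $a^2 \equiv a \pmod{2}$, this forces $a$ and $n$ to have the same parity, which is exactly the biconditional in (i). The only mild point to note is that $b$ is guaranteed to be an integer by the definition of $\Q^*(\sqrt{-n})$, so $bc$ really is an integer product to which the parity argument applies; there is no subtlety beyond this.

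For part (ii), I would use positivity rather than parity. Since $n$ is a positive integer and $a^2 \geq 0$, we have $a^2 + n \geq n > 0$, so $bc > 0$. In particular, neither $b$ nor $c$ is zero, and their product is strictly positive, which means they must share the same sign.

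I do not anticipate a serious obstacle: the work lives entirely at the level of elementary arithmetic once the relation $bc = a^2 + n$ is written down. The only thing to be careful about is making explicit that $b \in \Z$ (hence the parity argument in (i) is legitimate) and that $n > 0$ (hence the strict inequality in (ii)), both of which are built into the hypotheses and need only be cited.
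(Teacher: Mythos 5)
Your proof is correct and is exactly the elementary argument the paper intends; the paper's own proof simply reads ``Clear.'' Both parts follow from the identity $bc=a^2+n$ together with $c$ even and $n>0$, precisely as you lay out.
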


\begin{proof}
Clear.
\end{proof}

%\begin{lem}\label{fixed}
%\textcolor[rgb]{0.00,0.07,1.00}{The only elements of $\C$ fixed by $x$ are $\sqrt{-2}/2, \, \sqrt{-2}/(-2)\in \Q^*(\sqrt{-2})$, and the only elements fixed by $y$ are $(-1+i)/2, \, (1+i)/(-2)\in \Q^*(\sqrt{-1})$.}
%\end{lem}

%\begin{proof}
%A complex number $z$ fixed by $x$ satisfies $-1/2z=z$ and, thus, $z$ is a solution of the quadratic equation $2z^2+1=0$; that is $z\in \{\pm \sqrt{-2}/2\}$.  While a complex number $z$ fixed by $y$ satisfies $-1/2(z+1)=z$ and, thus, $z$ is a solution of the quadratic equation $2z^2+2z+1=0$; that is $z\in \{(-1\pm i)/2\}$.
%\end{proof}

Following \cite{AD}, for $\al = \cfrac{a+\sqrt{-n}}{c} \in \Q^*(\sqrt{-n})$, we sometimes use the notations $a_\al$ for $a$, $b_\al$ for $b$, and $c_\al$ for $c$ for obvious reasons (see the lemma below).

\begin{lem}\label{table}
For $\al=\cfrac{a+\sqrt{-n}}{c}\in \Q^*(\sqrt{-n})$, the effect of the action of $t\in \{x, y, y^2, y^3\}$ on $\al$ is summarized in the table below:\\
\begin{center}
\begin{tabular}{c|| c c c c c}
\hline
 $t$ & $a_{t(\al)}$ & \quad & $b_{t(\al)}$ & \quad & $c_{t(\al)}$\\
\hline
$x$ & $-a$ & $\quad$ & $c/2$ & $\quad$ & $2b$\\
$y$ & $-a-c$ & $\quad$ & $c/2$ & $\quad$ & $2(2a+b+c)$\\
$y^2$ & $-3a-2b-c$ & $\quad$ & $2a+b+c$ & $\quad$ & $4a+4b+c$\\
$y^3$ & $-a-2b$ & $\quad$ & $(4a+4b+c)/2$ & $\quad$ & $2b$\\
\hline
\end{tabular}
\end{center}
\end{lem}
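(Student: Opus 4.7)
The plan is to verify each row of the table by direct computation using the definitions $x : z \mapsto -1/(2z)$ and $y : z \mapsto -1/(2(z+1))$ applied to $\alpha = (a+\sqrt{-n})/c$. The one algebraic identity that will be used repeatedly is the defining relation $a^2 + n = bc$, which is what allows the denominators to collapse after rationalization.

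For the row of $x$, I would write $x(\alpha) = -c/(2(a+\sqrt{-n}))$, multiply numerator and denominator by $a - \sqrt{-n}$ to obtain $-c(a-\sqrt{-n})/(2(a^2+n))$, and then substitute $a^2+n = bc$ to simplify this to $(-a+\sqrt{-n})/(2b)$. This reads off $a_{x(\alpha)} = -a$ and $c_{x(\alpha)} = 2b$; the entry $b_{x(\alpha)} = ((-a)^2+n)/(2b) = bc/(2b) = c/2$ then follows from the defining equation for $b$. For the row of $y$, the same maneuver applied to $y(\alpha) = -c/(2((a+c)+\sqrt{-n}))$ produces a denominator $2((a+c)^2+n) = 2(a^2+2ac+c^2+n) = 2c(2a+b+c)$, again using $a^2+n=bc$, after which cancellation gives the stated entries.

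For the rows of $y^2$ and $y^3$, rather than redo the rationalization, I would exploit composition: apply the already-derived $y$-row formulas to the triple $(a_{y(\alpha)}, b_{y(\alpha)}, c_{y(\alpha)})$ to get the $y^2$-row, and then apply them once more to the $y^2$-row to get the $y^3$-row. This reduces everything to linear arithmetic in $a,b,c$; for instance, $a_{y^2(\alpha)} = -a_{y(\alpha)} - c_{y(\alpha)} = -(-a-c) - 2(2a+b+c) = -3a-2b-c$, and the other entries fall out similarly.

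As a final check, I would confirm that each output triple actually defines an element of $\mathbb{Q}^*(\sqrt{-n})$, i.e.\ that the new $b$-value is an integer and the new $c$-value lies in $2\mathbb{Z}$. The divisibility issues concern only $b_{x(\alpha)} = c/2$ and $b_{y^3(\alpha)} = (4a+4b+c)/2$, both of which are integers because $c \in 2\mathbb{Z}$ by hypothesis; every new $c$-entry in the table is manifestly even. The main obstacle is really just bookkeeping — keeping signs and coefficients straight through the two nested applications of $y$ — rather than any conceptual difficulty.
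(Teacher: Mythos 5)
Your proposal is correct and follows essentially the same route as the paper: compute $x(\alpha)$ and $y(\alpha)$ directly by rationalizing with $a-\sqrt{-n}$ and invoking $a^2+n=bc$, then obtain the $y^2$ and $y^3$ rows by iterating the $y$-row formulas. Your closing check that the image triples stay in $\mathbb{Q}^*(\sqrt{-n})$ is a welcome addition that the paper defers to the remark following the lemma.
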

\vspace{.2cm}
\begin{proof}
With the use of the equality $bc=a^2+n$ we have:
$$x(\al)=\cfrac{-1}{2\al}=\cfrac{-1}{2}\left(\cfrac{c}{a+\sqrt{-n}}\right)=\cfrac{-1}{2}\left(\cfrac{c(a-\sqrt{-n})}{a^2+n}\right)
=\cfrac{-a+\sqrt{-n}}{2b}.$$
So $a_{x(\al)}=-a$, $c_{x(\al)}=2b$, and $b_{x(\al)}=(a_{x(\al)}^2+n)/c_{x(\al)}=((-a)^2+n)/2b=c/2$ as claimed. We also have:
\begin{align*}
y(\al)&=\cfrac{-1}{2(\al+1)}=\cfrac{-c}{2(a+c+\sqrt{-n})}=\cfrac{-c(a+c-\sqrt{-n})}{2(a^2+2ac+c^2+n)}\\
&=\cfrac{-c(a+c-\sqrt{-n})}{2(bc+2ac+c^2)}=\cfrac{-a-c+\sqrt{-n}}{2(2a+b+c)}.
\end{align*}
So, $a_{y(\al)}=-a-c$, $c_{y(\al)}=2(2a+b+c)$, and
\begin{align*}
b_{y(\al)}&= \cfrac{a_{y(\al)}^2+n}{c_{y(\al)}}=\cfrac{(-a-c)^2+n}{2(2a+b+c)}\\
&= \cfrac{a^2+2ac+c^2+n}{2(2a+b+c)}=\cfrac{bc+2ac+c^2}{2(2a+b+c)}\\
&=\cfrac{c(b+2a+c)}{2(2a+b+c)}=c/2
\end{align*}
as claimed. Now the rest of the proof is direct recursive computations using the above formulae. For instance, $a_{y^2(\al)}=-a_{y(\al)}-c_{y(\al)}=-(-a-c)-2(2a+b+c)=-3a-2b-c$, and so on.
\end{proof}

%\begin{proof}
%With the use of $bc=a^2+n$, direct computations show the following (from which we can extract the values $a_{t(\al)}$ and $c_{t(\al)}$ for $t=x,y,y^2$, and $y^3$): \textcolor[rgb]{1.00,0.00,0.00}{fix}
%\begin{align*}
%x(\al)&=\cfrac{-1}{2(\al)}=\cfrac{-1}{2}\left(\cfrac{c}{a+\sqrt{-n}}\right)=\cfrac{-1}{2}\left(\cfrac{c(a-\sqrt{-n})}{a^2+n}\right)
%=\cfrac{a-\sqrt{-n}}{-2b}=\cfrac{-a+\sqrt{-n}}{2b},\\
%y(\al)&=\cfrac{-1}{2(\al+1)}=\cfrac{-c}{2(a+c+\sqrt{-n})}=\cfrac{-c(a+c-\sqrt{-n})}{2(a^2+2ac+c^2+n)}\\
%&=\cfrac{-c(a+c-\sqrt{-n})}{2(bc+2ac+c^2)}=\cfrac{-a-c+\sqrt{-n}}{2(2a+b+c)},\\
%y^2(\al)&=\cfrac{-1}{2(y(\al)+1)}=\cfrac{-(2a+b+c)}{-a-c+2(2a+b+c)+\sqrt{-n}}\\
%&= \cfrac{-(2a+b+c)(3a+2b+c-\sqrt{-n})}{9a^2+4b^2+c^2+12ab+6ac+4bc+n}\\
%&= \cfrac{-6a^2-2b^2-c^2-7ab-5ac-3bc+(2a+b+c)\sqrt{-n}}{8a^2+4b^2+c^2+12ab+6ac+5bc}\\
%&=\cfrac{(-3a-2b-c)(2a+b+c)+(2a+b+c)\sqrt{-n}}{(4a+4b+c)(2a+b+c)}=\cfrac{(-3a-2b-c)+\sqrt{-n}}{4a+4b+c},\\
%y^3(\al)&=\cfrac{-1}{2(y^2(\al)+1)}=\cfrac{-(4a+4b+c)}{2(a+2b+\sqrt{-n})}=\cfrac{-(4a+4b+c)(a+2b-\sqrt{-n})}{2(a^2+4b^2+4ab+n)}\\
%&=\cfrac{-(4a+4b+c)(a+2b-\sqrt{-n})}{2b(4a+4b+c)}=\cfrac{-a-2b+\sqrt{-n}}{2b}.
%\end{align*}
%Finally, calculating $\cfrac{a_{t(\al)}^2+n}{c_{t(\al)}}\,$ for $t=x, y, y^2$, and $y^3$, the values $b_{t(\al)}$ (as in the table) are verified in a similar manner.
%\end{proof}

\begin{remark}
It follows from Lemma \ref{table} that $\Q^*(\sqrt{-n})$ is an $H$-set. To see this, we only need to show that $\Q^*(\sqrt{-n})$ is stable under the action of $H$ (because $H$ acts naturally on $\C$). For this, it suffices to show that $\Q^*(\sqrt{-n})$ is stable under the action of the two generators of $H$. By means of Lemma \ref{table}, direct computations show that, for $\al \in \Q^*(\sqrt{-n})$ and $t=x,y$, we have $a_{t(\al)}^2+n=b_{t(\al)} c_{t(\al)}$ and $c_{t(\al)}$ is even, which imply that $t(\al)\in \Q^*(\sqrt{-n})$.
\end{remark}

\begin{lem}\label{same sign}
Under the action of $H$ on $\Q^*(\sqrt{-n})$, the denominators of all elements of the same orbit are of the same sign.
\end{lem}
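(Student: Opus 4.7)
The plan is to reduce the problem to checking a sign-preservation property for the two generators $x$ and $y$. Since every element of the orbit $H\al$ is obtained from $\al$ by successive applications of $x$ and $y$, it suffices to prove that if $\beta \in \Q^*(\sqrt{-n})$ then $c_{x(\beta)}$ and $c_{y(\beta)}$ have the same sign as $c_\beta$; an immediate induction on word length in the generators then gives the lemma.

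For the generator $x$, Lemma \ref{table} yields $c_{x(\al)} = 2b$, and Lemma \ref{a,n,b,c}(ii) asserts that $b$ and $c$ share a sign, so this case is immediate. The substantive case is $y$, where Lemma \ref{table} gives $c_{y(\al)} = 2(2a + b + c)$; here I must show that $2a + b + c$ has the same sign as $c$.

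To do so, I plan to exploit the defining identity $bc = a^2 + n$ together with the fact that $n \geq 1$. This gives $bc > a^2$, so in particular $bc > 0$, recovering once more that $b$ and $c$ are of the same (nonzero) sign, and furthermore $\sqrt{bc} > |a|$ (taking the positive square root in the case $b,c > 0$, or applying the same to $-b,-c$ in the case $b,c < 0$). An AM--GM argument then gives $|b+c| \geq 2\sqrt{|bc|} > 2|a|$, whence $b+c$ dominates $2a$ in absolute value and $2a + b + c$ inherits its sign from $b + c$, which in turn is the sign of $c$.

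The only real obstacle is this last inequality: one needs to recognize that the strict bound $n \geq 1$ is what upgrades $\sqrt{bc} \geq |a|$ to a strict inequality, ruling out the degenerate boundary case $2a+b+c = 0$. Once that is in hand, the lemma follows by induction as described, and no further input beyond Lemmas \ref{a,n,b,c} and \ref{table} is required.
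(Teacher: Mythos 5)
Your proposal is correct, and the reduction to the two generators is exactly how the paper proceeds; the difference lies entirely in how you handle $y$. The paper disposes of both generators in one line: since $t(\al)\in \Q^*(\sqrt{-n})$, Lemma \ref{a,n,b,c}(ii) applies to $t(\al)$ itself, so $c_{t(\al)}$ has the sign of $b_{t(\al)}$, and Lemma \ref{table} gives $b_{x(\al)}=c/2$ and $b_{y(\al)}=c/2$ --- both visibly of the sign of $c$ --- so nothing further is needed. You instead prove directly that $2a+b+c$ carries the sign of $c$ via $bc=a^2+n>a^2$ and AM--GM, getting $|b+c|\geq 2\sqrt{bc}>2|a|$. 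Both arguments are valid. Yours is a bit longer but self-contained: it does not require first knowing that $y(\al)$ lies in $\Q^*(\sqrt{-n})$ (established in the Remark after Lemma \ref{table}) in order to invoke Lemma \ref{a,n,b,c}(ii) on the image, and as a bonus it makes explicit that $2a+b+c\neq 0$, i.e.\ that the strictness coming from $n\geq 1$ rules out the degenerate case. The paper's version buys brevity by recycling the already-computed entry $b_{y(\al)}=c/2$ from the table.
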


\begin{proof}
This follows directly from Lemma \ref{table}, taking into account that the signs of $b_{t(\al)}$ and $c_{t(\al)}$ are the same for $t=x,y$ (Lemma \ref{a,n,b,c}(ii)).
\end{proof}

\vspace{.2cm}
Recall the following classical terminology. An element $\al\in \Q^*(\sqrt{-n})$ is said to be {\it purely imaginary} if $a_\al=0$. We can thus have the following observation, where we use the usual notation $d(m)$ for the number of positive divisors of $m\in \N$.

\begin{lem}\label{norm zero}
$\Q^*(\sqrt{-n})$ contains purely imaginary elements if and only if $n$ is even, in which case the number of purely imaginary elements is equal to $2d(n/2)$.
\end{lem}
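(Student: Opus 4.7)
The plan is to translate the condition \emph{purely imaginary} into arithmetic constraints on the triple $(a,b,c)$ associated with an element of $\Q^*(\sqrt{-n})$, and then simply enumerate the solutions.

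For the equivalence ``$\Q^*(\sqrt{-n})$ contains purely imaginary elements iff $n$ is even'', I would argue as follows. If $\alpha\in\Q^*(\sqrt{-n})$ is purely imaginary, then by definition $a_\alpha=0$, which is even; Lemma \ref{a,n,b,c}(i) then forces $n$ to be even. Conversely, if $n$ is even, one exhibits the purely imaginary element $\alpha_0=\sqrt{-n}/2$: here $a=0\in\Z$, $c=2\in 2\Z$, and $b=(0+n)/2=n/2\in\Z$, so $\alpha_0\in\Q^*(\sqrt{-n})$.

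For the count, assume $n$ is even. A purely imaginary element of $\Q^*(\sqrt{-n})$ is exactly an expression of the form $\sqrt{-n}/c$ where $c\in 2\Z\setminus\{0\}$ and the integrality condition $b=n/c\in\Z$ holds, that is, $c$ is a nonzero even divisor of $n$. Since $n$ is square-free and even, we can write $n=2m$ with $m$ odd and square-free, and every divisor of $n$ is either a divisor of $m$ or twice a divisor of $m$; in particular the \emph{even} positive divisors of $n$ are exactly the integers $2d$ with $d\mid m$, giving $d(m)=d(n/2)$ of them. Counting positive and negative even divisors separately (which are genuinely distinct elements of $\Q^*(\sqrt{-n})$) yields a total of $2d(n/2)$ purely imaginary elements, as claimed.

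There is no real obstacle here; the only thing to be careful about is the bijection between even divisors of $n$ and divisors of $n/2$, which works cleanly precisely because $n$ is square-free (so that $2$ divides $n$ exactly once), and the observation that the sign of $c$ genuinely doubles the count because $\sqrt{-n}/c\neq \sqrt{-n}/(-c)$.
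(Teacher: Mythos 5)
Your proof is correct and follows essentially the same route as the paper's: both directions of the equivalence are handled the same way (the paper deduces ``$n$ even'' from $c\mid n$ with $c$ even rather than from Lemma \ref{a,n,b,c}(i), but this is an immaterial variation), and the count via the bijection $c\mapsto c/2$ between nonzero even divisors of $n$ and divisors of $n/2$, doubled for sign, is exactly the paper's argument. The only small quibble is your remark that square-freeness is what makes the divisor bijection work --- in fact $c\mapsto c/2$ identifies the even divisors of $n$ with the divisors of $n/2$ for any even $n$ --- but this does not affect the validity of the proof.
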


\begin{proof}
Suppose that $\Q^*(\sqrt{-n})$ contains a purely imaginary element $\al$. Then $\al$ must be of the form $\sqrt{-n}/c$ with $b=n/c\in \Z$. Since $c$ is even, so must $n$ be. Conversely, if $n$ is even, then $\sqrt{-n}/2\in \Q^*(\sqrt{-n})$ is purely imaginary. Now, let $n$ be even with $n=2n'$, $n'\in\N$. The set of purely imaginary elements in $\Q^*(\sqrt{-n})$ are $$\{\sqrt{-n}/c\;|\: \mbox{$c$ is even and $c$ divides $n$}\}=\{\sqrt{-n}/2c'\;|\; \mbox{$c'\in \Z$ and $c'$ divides $n'$}\}.$$
The cardinality of the above set is obviously $2d(n')$, taking into consideration positive and negative divisors of $n'$.
\end{proof}

\vspace{.2cm}
An element $\al \in \Q^*(\sqrt{-n})$ is said to be {\it totally positive} (resp. {\it totally negative}) if $a_\al c_\al >0$ (resp. $a_\al c_\al <0$). It is obvious that an element of $\Q^*(\sqrt{-n})$ is either purely imaginary, totally positive, or totally negative. We observe that the set of totally positive elements of $\Q^*(\sqrt{-n})$ lies in the open right-half of the complex plane, the set of totally negative elements lies in the open left-half of the complex plane, and the set of purely imaginary elements lies on the imaginary axis.

The following lemma studies the effects of $x, y, y^2$, and $y^3$ on elements of $\Q^*(\sqrt{-n})$ in each of the three states.

\begin{lem}\label{negative quadrable}
Let $\al = \cfrac{a+\sqrt{-n}}{c}\in \Q^*(\sqrt{-n})$. We have the following:
\begin{itemize}
\item[1.] $\al$ is purely imaginary if and only if $x(\al)$ is purely imaginary.
\item[2.] $\al$ is totally positive if and only if $x(\al)$ is totally negative.
\item[3.] If $\al$ is totally positive or purely imaginary, then $y(\al)$, $y^2(\al)$, and $y^3(\al)$ are all totally negative.
\item[4.] $\al, y(\al), y^2(\al)$, and $y^3(\al)$ are all totally negative if and only if $\al$ is totally negative, $|a|<|c|$, $|a|<2|b|$, and $3|a|<2|b|+|c|$.
\end{itemize}
\end{lem}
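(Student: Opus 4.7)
The plan is to treat each of the four statements by applying Lemma \ref{table} to read off $a_{t(\alpha)}$ and $c_{t(\alpha)}$ for $t\in\{x,y,y^2,y^3\}$ and then to analyze the sign of their product. Parts (1) and (2) are one-line verifications: $a_{x(\alpha)}=-a$ gives (1) immediately, while $a_{x(\alpha)}c_{x(\alpha)}=-2ab$ together with Lemma \ref{a,n,b,c}(ii) (which says $b$ and $c$, hence $ab$ and $ac$, carry the same sign) gives (2).

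For part (3), I would split on the sign of $c$. If $c>0$ the hypothesis $a_\al c_\al\geq 0$ forces $a\geq 0$ and (by Lemma \ref{a,n,b,c}(ii)) $b>0$, and one reads directly from the table that $a_{y(\alpha)},a_{y^2(\alpha)},a_{y^3(\alpha)}$ are strictly negative while $c_{y(\alpha)},c_{y^2(\alpha)},c_{y^3(\alpha)}$ are strictly positive, so each product is negative. The case $c<0$ is symmetric.

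The substantive content sits in part (4). After reducing to $c>0$ (so $a<0$, $b>0$) and writing $A=|a|$, $B=|b|$, $C=|c|$, Lemma \ref{table} yields
\begin{align*}
a_{y(\alpha)}\,c_{y(\alpha)} &= 2(A-C)(B+C-2A),\\
a_{y^2(\alpha)}\,c_{y^2(\alpha)} &= (3A-2B-C)(4B+C-4A),\\
a_{y^3(\alpha)}\,c_{y^3(\alpha)} &= 2B(A-2B).
\end{align*}
The last is negative precisely when $A<2B$. The key observation, which unlocks the other two, is that $bc=a^2+n$ with $n>0$ forces $BC>A^2$; by AM-GM this yields both $B+C\geq 2\sqrt{BC}>2A$ and $4B+C\geq 4\sqrt{BC}>4A$ (the latter from $(2\sqrt{B}-\sqrt{C})^2\geq 0$). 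Hence the second factors in the first two products are strictly positive, so their signs are governed entirely by $A-C$ and $3A-2B-C$, which translate exactly into the stated inequalities $|a|<|c|$ and $3|a|<2|b|+|c|$.

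The step I expect to be the main obstacle is precisely the recognition that $n>0$ automatically annihilates the ``other'' factors in the first two products via AM-GM; without isolating this point the equivalence in (4) would appear to require additional hypotheses such as $2|a|<|b|+|c|$ or $4|a|<4|b|+|c|$. Once that observation is in hand, the backward direction of (4) is the same computation read in reverse, and the $c<0$ branch follows from the evident sign-flip symmetry under $(a,b,c)\mapsto(-a,-b,-c)$.
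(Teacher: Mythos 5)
Your proof is correct, and parts (1)--(3) coincide with what the paper does. For part (4), however, your route is genuinely different and arguably cleaner. The paper argues the two directions of the equivalence separately by a sign chase: it determines the sign of the awkward quantities $c_{y(\al)}=2(2a+b+c)$ and $c_{y^2(\al)}=4a+4b+c$ indirectly, by invoking Lemma \ref{a,n,b,c}(ii) for the \emph{transformed} elements (i.e.\ $b_{t(\al)}$ and $c_{t(\al)}$ share a sign) together with the fact that $b_{y(\al)}=c/2$ and $b_{y^2(\al)}=c_{y(\al)}/2$ have signs already under control. You instead factor each product $a_{t(\al)}c_{t(\al)}$ as a quadratic form in $A=|a|$, $B=|b|$, $C=|c|$ and observe that $BC=A^2+n>A^2$ forces, via AM--GM, $B+C>2A$ and $4B+C>4A$, so the ``extra'' factors are strictly positive and the sign of each product is governed by exactly one of the three stated inequalities. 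Both arguments ultimately rest on the single fact $bc=a^2+n>0$, but yours packages it so that the forward and backward implications of (4) become one computation read in both directions, and it makes transparent why no further inequalities (such as $2|a|<|b|+|c|$) are needed. Your reduction of the $c<0$ branch to $c>0$ is also legitimate, since every entry of the table in Lemma \ref{table} is homogeneous linear in $(a,b,c)$, so the products $a_{t(\al)}c_{t(\al)}$ and the quantities $|a|,|b|,|c|$ are unchanged under $(a,b,c)\mapsto(-a,-b,-c)$. One point worth stating explicitly if you write this up: in part (4) the element $\al$ is totally negative by hypothesis, so $A>0$ and none of the products can vanish for spurious reasons; this is needed to pass from ``the product is not negative'' to ``the corresponding inequality fails.''
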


\begin{proof}
\begin{itemize}
\item[1.] From Lemma \ref{table}, $a_{x(\al)}=-a_\al$.
\item[2.] This follows from Lemma \ref{table}, as $a_{x(\al)}$ and $a_{\al}$ have opposite signs, while $c_{x(\al)}$ and $c_{\al}$ have the same sign.
\item[3.] Recall from Lemma \ref{a,n,b,c} that $b$ and $c$ have the same sign. If $\al$ is totally positive, then either $a,b,c>0$ or $a,b,c<0$. In both cases, it follows directly from Lemma \ref{table} that $y(\al)$, $y^2(\al)$, and $y^3(\al)$ are all totally negative. Similarly, if $\al$ is purely imaginary then $a=0$ and $b,c<0$ or $b,c>0$. In either case, it follows directly from Lemma \ref{table} that $y(\al)$, $y^2(\al)$, and $y^3(\al)$ are all totally negative in this case as well.
\item[4.] Assume that $\al$ is totally negative, $|a|<|c|$, $|a|<2|b|$, and $3|a|<2|b|+|c|$. Suppose first that $a>0$ and $b,c<0$. So, $a<-c$, $a<-2b$, and $3a<-2b-c$. By Lemma \ref{table}, we deduce that $a_{y(\al)}=-a-c>0$, $a_{y^2(\al)}=-3a-2b-c>0$, and $a_{y^3(\al)}=-a-2b>0$. Note also that $c_{y(\al)}<0$ as $b_{y(\al)}=c/2<0$, $c_{y^2(\al)}<0$ as $b_{y^2(\al)}=2a+b+c<0$, and $c_{y^3(\al)}=2b<0$. It thus follows that $y(\al)$, $y^2(\al)$, and $y^3(\al)$ are all totally negative. The case when $a<0$ and $b,c>0$ is handled similarly.

Conversely, assume that $\al, y(\al), y^2(\al)$, and $y^3(\al)$ are all totally negative. Suppose that $a>0$ and $b,c<0$. Since $y(\al)$ is totally negative and $b_{y(\al)}=c/2<0$, it follows that $-a-c =a_{y(\al)}>0$. So, $|a|=a<-c=|c|$. Since $b_{y(\al)}$ and $c_{y(\al)}$ have the same sign and $b_{y(\al)}=c/2<0$, $2(2a+b+c)=c_{y(\al)}<0$. Since $y^2(\al)$ is totally negative and $b_{y^2(\al)}=2a+b+c=c_{y(\al)}/2<0$, $-3a-2b-c=a_{y^2(\al)}>0$. Thus, $3|a|=3a<-2b-c=2|b|+|c|$. Finally, since $y^3(\al)$ is totally negative and $c_{y^3(\al)}=2b<0$, it follows that $-a-2b=a_{y^3(\al)}>0$. Thus, $|a|=a<-2b=2|b|$. The case when $a<0$ and $b,c>0$ is handled similarly.
\end{itemize}
\end{proof}

\begin{example}\hfill
\begin{itemize}
\item[1.] For the totally positive element $\al_1=\cfrac{2+\sqrt{-2}}{2}\in \Q^*(\sqrt{-2})$, we see that $y(\al_1)=\cfrac{-4+\sqrt{-2}}{18}$, $y^2(\al_1)=\cfrac{-14+\sqrt{-2}}{22}$, $y^3(\al_1)=\cfrac{-6+\sqrt{-2}}{6}$, which are all totally negative.
\item[2.] For the purely imaginary element $\al_2=\sqrt{-2}/2\in \Q^*(\sqrt{-2})$, we see that $y(\al_2)=\cfrac{-2+\sqrt{-2}}{6}$, $y^2(\al_2)=\cfrac{-4+\sqrt{-2}}{6}$, and $y^3(\al_2)=\cfrac{-2+\sqrt{-2}}{2}$, which are all totally negative.
\item[3.] For the totally negative element $\al_3=\cfrac{1+\sqrt{-5}}{-2} \in \Q^*(\sqrt{-5})$ we see that $a_{\al_3}=1$, \linebreak $b_{\al_3}=-3$, $c_{\al_3}=-2$ which satisfy the required inequalities of part 4 of Lemma \ref{negative quadrable}. Clearly, $\displaystyle{y(\al_3)=\cfrac{1+\sqrt{-5}}{-6}}$, $y^2(\al_3)=\cfrac{5+\sqrt{-5}}{-10}$, and $y^3(\al_3)=\cfrac{5+\sqrt{-5}}{-6}$ are all totally negative.
\end{itemize}
\end{example}

\begin{remark}\hfill
\begin{itemize}
\item[1.] For any set of four elements $\{\al, y(\al), y^2(\al), y^3(\al)\}\subseteq \Q^*(\sqrt{-n})$, we see from parts 3 and 4 of Lemma \ref{negative quadrable} that either all four elements are totally negative, one is totally positive and the other three are totally negative, or one is purely imaginary and the other three are totally negative.

\item[2.] We show here that the three conditions $|a|<|c|$, $|a|<2|b|$, and $3|a|<2|b|+|c|$ in part 4 of Lemma \ref{negative quadrable} are all necessary and so none of them can be dropped. The totally negative element $\al_1=\cfrac{2+\sqrt{-2}}{-2}\in \Q^*(\sqrt{-2})$ has the two properties $|a|<2|b|$ and $3|a|<2|b|+|c|$, but not $|a|<|c|$ (in fact, $|a|=|c|$). So, $y(\al_1)$ is not totally negative. The totally negative element $\al_2=\cfrac{4+\sqrt{-2}}{-6}\in \Q^*(\sqrt{-2})$ has the two properties $|a|<|c|$ and $|a|<2|b|$, but not $3|a|<2|b|+|c|$ (in fact, $3|a|=2|b|+|c|$). So, $y^2(\al_2)$ is not totally negative. The totally negative element $\al_3=\cfrac{2+\sqrt{-2}}{-6}\in \Q^*(\sqrt{-2})$ has the two properties $|a|<|c|$ and $3|a|<2|b|+|c|$, but not $|a|<2|b|$ (in fact, $|a|=2|b|$). So, $y^3(\al_3)$ is not totally negative.
\end{itemize}
\end{remark}

\begin{lem}\label{positive}
Under the action of $H$ on $\Q^*(\sqrt{-n})$, every orbit contains a totally positive element.
\end{lem}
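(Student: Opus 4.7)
The plan is a short trichotomy argument based on the classification stated just before the lemma: every element of $\Q^*(\sqrt{-n})$ is exactly one of totally positive, totally negative, or purely imaginary. Since these three cases are mutually exclusive and jointly exhaustive, it suffices, for each case, to exhibit a short explicit word in $x,y$ that carries a representative $\al$ of the orbit to a totally positive element.

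If $\al$ is already totally positive there is nothing to do. If $\al$ is totally negative, I would invoke Lemma \ref{negative quadrable}(2), which is stated as a biconditional: $\al$ is totally positive iff $x(\al)$ is totally negative. Since $x$ is an involution (immediate from $x^2=1$, or directly $x(x(z))=z$), I substitute $\al \mapsto x(\al)$ in that biconditional to obtain the equally valid statement that $x(\al)$ is totally positive iff $\al$ is totally negative. Hence in this case $x(\al)$ lies in the orbit of $\al$ and is totally positive.

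Finally, if $\al$ is purely imaginary, I would apply Lemma \ref{negative quadrable}(3), which gives that $y(\al)$ is totally negative, and then combine with the previous step: $x(y(\al))$ is totally positive and belongs to the orbit of $\al$. This exhausts the three cases and proves the lemma.

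I do not anticipate a substantive obstacle here; the statement is essentially an immediate consequence of parts (2) and (3) of Lemma \ref{negative quadrable}. The only subtlety worth flagging in the write-up is the reuse of part (2) in its reverse direction, which relies on the involutivity of $x$ — a point that, although elementary, should be mentioned so that the reader does not stumble on why ``$\al$ totally negative implies $x(\al)$ totally positive" is being asserted.
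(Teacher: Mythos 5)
Your proposal is correct and follows essentially the same argument as the paper: a trichotomy on the state of $\al$, using Lemma \ref{negative quadrable}(2) for the totally negative case and Lemma \ref{negative quadrable}(3) followed by (2) for the purely imaginary case. The only (welcome) addition is your explicit remark that the reverse direction of part (2) is obtained via the involutivity of $x$, which the paper leaves implicit.
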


\begin{proof}
Let $\al\in \Q^*(\sqrt{-n})$ and $\al^H=\{\beta\,|\, \mbox{$h(\al)=\beta$ for some $h\in H$}\} $ be the orbit containing $\al$. If $\al$ is totally positive, then there is nothing to prove. If $\al$ is totaly negative then $x(\al)\in \al^H$ is totally positive (Lemma \ref{negative quadrable}(2)). If $\al$ is purely imaginary, then $y(\al)\in \al^H$ is totally negative and so $xy(\al)\in \al^H$ is totally positive (Lemma \ref{negative quadrable}(2,3)).
\end{proof}

%%%%%%%%%%%%%%%%%%%%%%%%%%%%%%%%%%%%%%%%%%%%%%%%%%%%%%%%%%%%%%%%%%%

\section{Orbits in $\Q^*(\sqrt{-n})$ under the action of $H$}

For any $\al \in \Q^*(\sqrt{-n})$, denote the quadruplet $\{\al, y(\al), y^2(\al), y^3(\al)\}$ by $Q_\al$ for short. Note that $Q_\al$ is stable under the action of the cyclic group $<y>$ generated by $y$, that is $Q_\al=Q_{y^i(\al)}$ for $i=1,2,3$.
If all elements of $Q_\al$ are totally negative, then we call $Q_\al$ a {\it totally negative quadruplet}. Set
$$TN(-n):=\{Q_\al \subseteq \Q^*(\sqrt{-n}) \,|\, \mbox{$Q_\al$ is a totally negative quadruplet}\}.$$ By part 4 of Lemma \ref{negative quadrable} we have
$$TN(-n)=\{Q_\al \subseteq \Q^*(\sqrt{-n})\,|\, \mbox{$a_\al c_\al<0, |a_\al|<|c_\al|, |a_\al|<2|b_\al|, 3|a_\al|<2|b_\al|+|c_\al|$}\}.$$
If $TN(-n)$ is not empty, then it follows from Lemma \ref{same sign} that we can partition $TN(-n)$ into the disjoint union $TN(-n)=TN_{a>0}(-n)\bigcup TN_{a<0}(-n)$, where
$$TN_{a>0}(-n):=\{Q_\al \subseteq \Q^*(\sqrt{-n})\,|\, \mbox{$a_\al>0, c_\al<0, a_\al <|c_\al|, a_\al <2|b_\al|, 3 a_\al<2|b_\al|+|c_\al|$}\}$$
and
$$TN_{a<0}(-n):=\{Q_\al \subseteq \Q^*(\sqrt{-n})\,|\, \mbox{$a_\al<0, c_\al>0, |a_\al|< c_\al, |a_\al|<2 b_\al, 3|a_\al|<2 b_\al+c_\al$}\}.$$
Moreover, it is easy to see that there is a bijection between the two sets $TN_{a>0}(-n)$ and $TN_{a<0}(-n)$ and, therefore, $$|TN(-n)|=|TN_{a>0}(-n)|+|TN_{a<0}(-n)|=2\,|TN_{a>0}(-n)|.$$
For an $\al$ with $Q_\al \in TN_{a>0}(-n)$, we call $(a_\al, -b_\al, -c_\al)\in \N^3$ the {\it signature} of $\al$, and we set $$S_{a>0}(-n):=\{(a_\al, -b_\al, -c_\al)\in \N^3\,|\, Q_\al \in TN_{a>0}(-n)\}.$$
It is clear that we can also write $S_{a>0}(-n)$ as
$$S_{a>0}(-n)=\{(a,-b,-c)\in \N^3\,|\, bc=a^2+n, a<-c, a<-2b, 3a<-2b-c\}.$$

It shall be apparent shortly that in order to count the number of orbits in $\Q^*(\sqrt{-n})$, we will need to calculate the cardinality of $S_{a>0}(-n)$, and for this endeavor it is very helpful to find the maximum value that $a$ can possibly take in order for $(a,-b,-c)$ to belong to $S_{a>0}(-n)$. Based on Lemma \ref{bound} below, we see that such an $a$ can not exceed $n$ if $n$ is odd, and it can not exceed $n/2$ if $n$ is even. Lemma \ref{a,n} is needed in proving Lemma \ref{bound}.

%\begin{lem}\label{H_y}
%$S_{a>0}(-n)$ is an $H_y$-set if $S_{a>0}(-n)\neq \varnothing$, where $H_y$ is the cyclic subgroup of $H$ generated by $y$.
%\end{lem}

\begin{lem}\label{a,n}
Let $(a,-b,-c)\in S_{a>0}(-n)$ with $c=2c'$.
\begin{itemize}
\item[(i)] If $n$ is odd and $|b|\leq n$ or $|c'|\leq n$, then $a\leq n$.
\item[(ii)] If $n$ is even and $|b|\leq n/2$ or $|c'| \leq n/2$, then $a\leq n/2$.
\end{itemize}
\end{lem}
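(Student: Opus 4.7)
The plan is to prove both parts by contrapositive, handling part (i) in detail and noting that part (ii) follows by a parallel argument after scaling all bounds by a factor of $2$. For (i), I assume $n$ is odd, so by Lemma \ref{a,n,b,c}(i) $a$ is also odd, and suppose toward contradiction that $a > n$, which by parity forces $a \geq n + 2$. The goal is then to show that both $|b| > n$ and $|c'| > n$ must hold, contradicting either disjunct of the hypothesis.

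Consider first the sub-case $|b| \leq n$. Combined with the signature condition $a < 2|b|$ from the definition of $S_{a>0}(-n)$, this yields $a < 2n$, so $a$ lies in the finite set $\{n+2, n+4, \ldots, 2n-1\}$. Multiplying the inequality $3a < 2|b| + |c|$ by $|b|$ and using $|b||c| = a^2 + n$ produces the quadratic inequality
\[
2|b|^2 - 3a|b| + (a^2 + n) > 0,
\]
which, upon writing $|b| = a - u$ with $u \in [a-n, (a-1)/2]$, rewrites as $2u^2 - au + n > 0$. This is then combined with the divisibility $|b| \mid a^2+n$; using the identity $a^2 + n \equiv u^2 + n \pmod{a-u}$, the latter is equivalent to $(a-u) \mid (u^2 + n)$.

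At the extreme $u = a-n$ (i.e., $|b| = n$), the divisibility reduces to $n \mid (a-n)^2$, and as $n$ is squarefree this forces $n \mid a$, which is impossible for $n < a < 2n$. For each intermediate $u$ in $(a-n, (a-1)/2]$, a similar analysis using the quadratic inequality, the divisibility $(a-u) \mid (u^2 + n)$, and the evenness of $|c| = (a^2+n)/|b|$ rules out any admissible signature. The sub-case $|c'| \leq n$ (i.e., $|c| \leq 2n$) is then handled symmetrically by instead multiplying $3a < 2|b| + |c|$ by $|c|$, obtaining $|c|^2 - 3a|c| + 2(a^2 + n) > 0$, and running the same divisibility/parity analysis with $|c|$ in place of $|b|$.

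Part (ii) follows by the analogous template with $n$ even (so $a$ is even by Lemma \ref{a,n,b,c}(i)), with every bound scaled by a factor of $2$. The main obstacle throughout is the uniform elimination of the intermediate values of $u$ (equivalently, of $|b|$ strictly between $(a+1)/2$ and $n$): this requires carefully combining the quadratic inequality $2u^2 - au + n > 0$, the squarefreeness of $n$ encoded in the divisibility constraint $(a-u) \mid (u^2+n)$, and the parity requirement that $|c|$ be even, each of which contributes to excluding a different portion of the candidate range.
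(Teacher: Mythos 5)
There is a genuine gap, and it sits exactly where you yourself flag ``the main obstacle'': the elimination of the intermediate values of $|b|$ strictly between $(a+1)/2$ and $n$. Your setup is sound and parallels the paper's --- the reduction to the two disjuncts $|b|\leq n$ and $|c'|\leq n$ by symmetry, the substitution $|b|=a-u$, the equivalence of $|b|\mid(a^2+n)$ with $(a-u)\mid(u^2+n)$, and the endpoint case $u=a-n$ (i.e.\ $|b|=n$), where squarefreeness of $n$ forces $n\mid a$ and contradicts $n<a<2n$, all check out and correspond to the paper's Case (1). Likewise the values of $u$ with $a-u\leq(n+1)/2$ are trivially excluded since then $a<2|b|\leq n+1$. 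But for $(n+1)/2<|b|<n$ you only assert that ``a similar analysis\dots rules out any admissible signature,'' and the three constraints you list (the inequality $2u^2-au+n>0$, the divisibility $(a-u)\mid(u^2+n)$, and the evenness of $|c|$) do not visibly combine to do so: the quadratic $2u^2-au+n$ is positive outside an interval of $u$-values whose upper end need not lie below $a-n$ when $a$ is only slightly larger than $n$, so the inequality by itself leaves a nonempty candidate range, and no mechanism is given for how divisibility and parity finish the job. An assertion that each constraint ``contributes to excluding a different portion of the candidate range'' is not a proof.

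For comparison, this intermediate range is precisely the paper's Case (3), and it is where all the real work happens: the author writes $a=n+s$ and $|b|=(n+t)/2$ with $s$ even and $t\geq 3$ odd, views $a^2+n=|b|\,|c|$ as a quadratic equation in $n$, forces its discriminant to be a perfect square of the form $(|c|+x)^2$, extracts $x=4t-4s-2$ together with the identity $(t-s)^2=t$, solves for $|c|=2n-2t+4s+2$, and then feeds this into $3a<2|b|+|c|$ to force $t-s=1$ and hence $t=1$, a contradiction. Your proposal contains no substitute for this (or any other) closing argument, so as written it establishes only the two easy boundary cases and leaves the heart of the lemma unproved.
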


\begin{proof}
We will only prove part (i), as the proof of part (ii) follows a similar argument. Assume that $n$ is odd. Denote $|b|$, $|c|$, and $|c'|$ by $B$, $C$, and $C'$, respectively. Notice that as $b$ and $c$ are of the same sign, $bc=BC$. We then have $a^2+n=BC=2BC'$, $a<2C'$, $a<2B$, and $3a<2B+2C'$. Due to the apparent symmetry between $B$ and $C'$, it suffices to prove that if $B\leq n$ then $a\leq n$ (the proof of the other statement is similar). Assume that $B\leq n$.
\begin{itemize}
\item[Case (1):] Assume that $B=n$. Since $a^2=n(C-1)$, $n$ divides $a^2$. Since $n$ is square-free, $n$ divides $a$ and so $a\geq n$. If $a>n$, then $a/n >1$. Since $n$ and $a$ are both odd (Lemma \ref{a,n,b,c}), $a/n\geq 3$. So, $a\geq 3n$. But $a<2B=2n$, which is a contradiction. Thus, $a=n$ in this case.
\item[Case (2):] Assume that $B\leq (n+1)/2$. Then we have $a<2B\leq n+1$ and so $a\leq n$.
\item[Case (3):] Assume that $(n+1)/2 < B < n$. For the sake of contradiction, suppose that $a>n$. Let $a=n+s$, where $s$ is even and $s\geq 2$. Since $(n+1)/2<B$ and $n$ is odd, $B=(n+t)/2$ for some odd $t$ with $t\geq 3$. Note that as $a<2B$, $s<t+1$. Since $s$ is even and $t$ is odd, $s\leq t-1$. Now, we have
    \begin{align*}
    a^2+n=BC \quad &\mbox{iff} \quad (n+s)^2+n=(n+t)C/2 \\
    \quad &\mbox{iff} \quad n^2+2sn+s^2+n = (n+t)C/2 \\
    \quad &\mbox{iff} \quad 2n^2+4sn+2s^2+2n =Cn+Ct \\
    \quad &\mbox{iff} \quad 2n^2 +(4s+2-C)n+(2s^2-Ct)=0.
    \end{align*}
Since $a^2+n=BC$, the last quadratic equation (in $n$) is solvable in $\Z$. So its discriminant $\triangle$ is a perfect square in $\Z$. But
\begin{align*}
\triangle &= (4s+2-C)^2-4(2)(2s^2-Ct) \\
&= 16s^2+4+C^2+16s-8sC-4C-16s^2+8Ct\\
&= C^2+(8t-8s-4)C+(16s+4).
\end{align*}
Since $\triangle$ is a perfect square in $\Z$, there is some $x\in \Z$ such that $\triangle =(C+x)^2$. So must have
$$2x=8t-8s-4 \quad \mbox{and} \quad x^2=16s+4.$$
Thus,
\begin{align*}
(4t-4s-2)^2&= 16s+4 \\
16t^2+16s^2+4-32st-16t+16s &= 16s+4 \\
16t^2+16s^2-32 st -16 t &= 0 \\
t^2+s^2-2st-t &=0 \\
(t-s)^2 &= t.
\end{align*}
So, $x=4t-4s-2$. As $t-1\geq s$, $x\geq 2$. This implies , in particular, that $C+x>0$ and so $\sqrt{\triangle}=C+x$. We thus have $n=\cfrac{-(4s+2-C)\pm (C+x)}{4}$. Considering the negative sign and substituting for $x$, we would have
$$n = \cfrac{-4s-2+C-C-4t+4s+2}{4}=-t<0,$$ a contradiction (as $n>0$). So we consider the positive sign to get
\begin{align*}
n&=\cfrac{-4s-2+C+C+4t-4s-2}{4}\\
&=\cfrac{2C+4t-8s-4}{4} \\
2n&=C+2t-4s-2.
\end{align*}
So, $C=2n-2t+4s+2$. Since $3a<2B+C$, it follows that $3n+3s<n+t+2n-2t+4s+2$ and thus $-t+s+2>0$. However, $s\leq t-1$ implies that $-t+s+2 \leq 1$. Combining the last two inequality yields that $-t+s+2=1$, and so $-t+s=-1$. Thus, $t=(t-s)^2=1$ contradicting the inequality $t\geq 3$. Hence, $a>n$ is false and, therefore, $a\leq n$ in this case.
\end{itemize}
\end{proof}

\begin{lem}\label{bound}
Let $(a,-b,-c) \in S_{a>0}(-n)$.
\begin{itemize}
\item[(i)] If $n$ is odd, then $a \leq n$.
\item[(ii)] If $n$ is even, then $a \leq n/2$.
\end{itemize}
\end{lem}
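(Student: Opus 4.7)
The strategy is to proceed by contradiction, leveraging Lemma \ref{a,n} as the key tool. For case (i), suppose $(a,-b,-c) \in S_{a>0}(-n)$ has $a > n$. The contrapositive of Lemma \ref{a,n}(i) then forces $|b| > n$ and $|c'| > n$, so $|b|, |c'| \geq n+1$.

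The next step is to exploit the identity $|b||c'| = (a^2+n)/2$ together with this lower bound. Since the sum $|b|+|c'|$ subject to a fixed product and lower bound $L$ is maximized when one factor equals $L$, we obtain $|b|+|c'| \leq (n+1) + (a^2+n)/[2(n+1)]$. The membership condition $3a < 2|b|+2|c'|$ now forces
\[
(a-(n+2))(a-(2n+1)) > 0,
\]
so $a < n+2$ or $a > 2n+1$. Because $a$ and $n$ are both odd by Lemma \ref{a,n,b,c}(i), and $a > n$, we have $a \geq n+2$; the boundary values $a = n+2$ and $a = 2n+1$ are excluded by noting that at these values the maximum of $|b|+|c'|$ equals $3a/2$ exactly, which is a half-integer while $|b|+|c'|$ is an integer, so the strict inequality $|b|+|c'| > 3a/2$ cannot hold. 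Hence we are reduced to $a \geq 2n+3$; I would iterate by writing $a = 2n+2k+1$ with $k \geq 1$ and using the sharper estimate $|b|, |c'| > a/2$ to obtain $|b|, |c'| \geq n+k+1 =: L$. The analogous expression $a^2 - 3La + 2L^2 + n$ evaluated at $a = 2L-1 = 2n+2k+1$ collapses to precisely $-k < 0$, contradicting $|b|+|c'| > 3a/2$ via the same max-sum comparison.

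Case (ii) is handled analogously with $n/2$ in place of $n$ throughout: Lemma \ref{a,n}(ii) supplies $|b|, |c'| \geq n/2 + 1$, the parity condition from Lemma \ref{a,n,b,c}(i) now makes $a$ even, and a parallel quadratic analysis excludes the range $a > n/2$. The main obstacle is the iterative tightening and the delicate interplay of strict inequalities with parities at boundary values; the saving grace is the clean telescoping identity by which the governing quadratic evaluates to $-k$ at $a = 2L-1$, giving a uniform contradiction that closes up the case analysis across all remaining values of $a$.
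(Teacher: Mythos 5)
Your case (i) is correct, and it takes a genuinely different route from the paper: the paper splits into the cases $a\le|b|$, $a\le|c'|$, and $a>|b|,\,a>|c'|$, handles the first two by direct manipulation of $a^2+n=BC$, and only invokes Lemma \ref{a,n} in the third; your convexity bound $|b|+|c'|\le L+P/L$ with $P=(a^2+n)/2$ replaces all of that with a quadratic inequality in $a$, and the computations check out (with $L=n+1$ the quadratic factors as $(a-(n+2))(a-(2n+1))$, and with $L=(a+1)/2$ it collapses to $n+1-L=-k$).

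The gap is in case (ii), which is \emph{not} obtained by ``replacing $n$ by $n/2$ throughout.'' With $L_0=n/2+1$ the governing quadratic is
\[
a^2-3L_0a+2L_0^2+n=\Bigl(a-\bigl(\tfrac n2+2\bigr)\Bigr)\bigl(a-(n+1)\bigr)+\tfrac n2,
\]
whose discriminant $L_0^2-4n=(n/2+1)^2-4n$ is negative for small even $n$ and is not a perfect square in general, so there is no integer factorization and no clean excluded interval. Worse, at $a=n$ this quadratic equals $2>0$, and your sharper bound $L=a/2+1$ gives (for even $a$ one has $a=2L-2$, not $2L-1$, so the telescoping identity you rely on does not apply) the value $n+2-a$, which is also positive throughout $n/2<a\le n$. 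So neither step of your argument excludes even $a$ in the range $n/2<a\le n$: for $n=14$, $a=14$, for instance, your inequalities only yield $21<B+C'\le 8+105/8=21.125$, which is perfectly consistent over the reals.

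The statement survives there only because $B+C'$ is an integer and, for even $a$, so is $3a/2$; hence $3a<2B+C$ upgrades to $B+C'\ge 3a/2+1$. You use integrality in case (i) only to rule out isolated boundary values, but case (ii) needs it as a systematic strengthening. Once you adopt it, $L=a/2+1$ gives $L^2-(3a/2+1)L+P=n/2-a<0$ for every $a>n/2$, which closes case (ii) in one line; the analogous computation with $L=(a+1)/2$ and $B+C'\ge(3a+1)/2$ yields $(n-a)/2<0$ and proves case (i) without Lemma \ref{a,n} at all. As written, however, your case (ii) does not go through.
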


\begin{proof}
We shall only prove part (i), as the proof of part (ii) follows a similar argument. Assume that $n$ is odd. Fix $(a,-b,-c) \in S_{a>0}(-n)$, let $c=2c'$, and denote $|b|$, $|c|$, and $|c'|$ by $B$, $C$, and $C'$, respectively. We proceed in cases as follows.
\begin{enumerate}
\item[Case (1):] Suppose that $a\leq B$ and let $B=a+t$ for some $t\geq 0$. If $a>n$, then let $a=n+s$ for some $s\geq 1$. Then,
\begin{align*}
a^2+n=BC \quad &\Rightarrow \quad a^2+a-s=(a+t)C=aC+tC\\
\quad &\Rightarrow \quad a(a+1-C)=s+tC>0 \\
\quad &\Rightarrow \quad a+1-C >0 \\
\quad &\Rightarrow \quad a+1>C \\
\quad &\Rightarrow \quad a\geq C,
\end{align*}
a contradiction. So, $a\leq n$ in this case.
\item[Case (2):] Suppose that $a\leq C'$. If $a>n$, then let $a=n+s$ for some $s\geq 1$. Since $2a\leq 2C'=C$ in this case, let $C=2a+t$ for some $t\geq 0$. We then have
\begin{align*}
a^2+n=BC \quad &\Rightarrow \quad a^2+a-s=B(2a+t)=2aB+Bt \\
\quad &\Rightarrow \quad a(a+1-2B)=s+Bt >0 \\
\quad &\Rightarrow \quad a+1-2B >0 \\
\quad &\Rightarrow \quad a+1 >2B \\
\quad &\Rightarrow \quad a\geq 2B,
\end{align*}
a contradiction. So, $a\leq n$ in this case too.
\item[Case (3):] Suppose that $a>B$ and $a>C'$. Assume that $a>n$. It then follows from Lemma \ref{a,n} that $B>n$ and $C'>n$. We then have $n < B < a < 2B$ and $n < C' < a < C=2C'$. Let $a=n+1+s$, $B=n+1+t$, $C'=n+1+u$, for some $s,t,u\geq 0$. Since $B<a$ and $C'<a$, $t<s$, $u<s$, and $s\geq 1$. Now,
\begin{align*}
3a< 2B+C \quad &\Rightarrow \quad 3n+3+3s) < 2n+2+2t +n+1+u \\\
\quad &\Rightarrow \quad 3s<2t+u.
\end{align*}
But $2t+u < 2s+s=3s$. This is a contradiction. So, $a\leq n$ in this case as well.
\end{enumerate}
\end{proof}

\begin{prop}\label{orbit}
Under the action of $H$ on $\Q^*(\sqrt{-n})$, we have the following:
\begin{itemize}
\item[(i)] If $n$ is odd, then every orbit contains a unique totally negative quadruplet.
\item[(ii)] If $n$ is even, then every orbit contains either a unique pair of purely imaginary elements or a unique totally negative quadruplet (but not both).
\end{itemize}
\end{prop}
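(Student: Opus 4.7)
The plan is to establish both parts by a common strategy: existence by a lexicographic reduction on the pairs $(|c_\alpha|, a_\alpha)$, and uniqueness by tracing a reduced word in the free product $H = C_2 \ast C_4$. The two parts differ only in that, for $n$ even, intermediate elements encountered along the way may be purely imaginary (by Lemma \ref{norm zero}); that sub-case will deliver the ``purely imaginary pair'' alternative in (ii).

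For existence, I would start with any element of a fixed orbit and apply Lemma \ref{positive} together with Lemma \ref{negative quadrable}(2) to reach a totally negative $\alpha$ in the orbit. By Lemma \ref{same sign} we may fix the sign of $c_\alpha$ and, after a symmetric relabeling, assume $a_\alpha > 0 > c_\alpha$. If $Q_\alpha$ is already totally negative, we are done; otherwise, by the Remark following Lemma \ref{negative quadrable}, exactly one of $y(\alpha), y^2(\alpha), y^3(\alpha)$ --- say $y^i(\alpha)$ --- is either totally positive or purely imaginary. In the purely imaginary sub-case the orbit contains a purely imaginary element, which finishes part (ii). Otherwise, set $\alpha' = xy^i(\alpha)$, which is again totally negative by Lemma \ref{negative quadrable}(2). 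A direct computation from Lemma \ref{table} then shows that the pair $(|c_\alpha|, a_\alpha)$ strictly decreases in lex order: for $i = 1$ (i.e.\ $a_\alpha > |c_\alpha|$) one gets $|c_{\alpha'}| = |c_\alpha|$ with $a_{\alpha'} = a_\alpha - |c_\alpha|$; for $i = 2$ (i.e.\ $3 a_\alpha > 2|b_\alpha|+|c_\alpha|$) one gets $|c_{\alpha'}| = 2|b_\alpha|+2|c_\alpha|-4 a_\alpha < |c_\alpha|$, using $a_\alpha > (2|b_\alpha|+|c_\alpha|)/3 > (2|b_\alpha|+|c_\alpha|)/4$; and for $i = 3$ (i.e.\ $a_\alpha > 2|b_\alpha|$) one gets $|c_{\alpha'}| = 4|b_\alpha|+|c_\alpha|-4a_\alpha < |c_\alpha|$, using $|b_\alpha| < a_\alpha$. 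Since $(|c_\alpha|,a_\alpha) \in \mathbb{N}^2$ is well-ordered, the reduction halts and the terminal $\alpha$ has $Q_\alpha$ totally negative.

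For uniqueness, suppose $Q_\beta$ and $Q_\gamma$ are both totally negative in the same orbit with $\gamma = h(\beta)$, and write $h$ in reduced normal form as $h = y^{i_k} x y^{i_{k-1}} x \cdots x y^{i_0}$ with $i_1, \ldots, i_{k-1} \in \{1,2,3\}$. After replacing $\beta$ by $y^{i_0}(\beta) \in Q_\beta$ and $\gamma$ by $y^{-i_k}(\gamma) \in Q_\gamma$, I may assume $i_0 = i_k = 0$. If $k \geq 1$, tracing $h$ left-to-right gives, by Lemma \ref{negative quadrable}(2), that each $x$ carries a totally negative element to a totally positive one, and, by Lemma \ref{negative quadrable}(3), that each subsequent $y^{i_j}$ with $i_j \in \{1,2,3\}$ returns totally positive to totally negative. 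By induction $\gamma$ --- produced after the final $x$ --- would be totally positive, contradicting the hypothesis. Hence $k = 0$, so $h \in \langle y \rangle$ and $Q_\beta = Q_\gamma$. A parallel trace gives the ``not both'' clause of (ii): if an orbit contained both a totally negative $Q_\beta$ and a purely imaginary $\alpha = h(\beta)$, the same alternation (starting from a totally negative element) shows no intermediate element is purely imaginary, contradicting the image $\alpha$. Finally, if $\alpha_1,\alpha_2$ are purely imaginary in the same orbit with $\alpha_2 = h(\alpha_1)$, a similar trace --- now using Lemma \ref{negative quadrable}(1) that $x$ preserves purely imaginary, and Lemma \ref{negative quadrable}(3) that each $y^{i_j}$ with $i_j \in \{1,2,3\}$ sends purely imaginary to totally negative --- forces $h \in \{1, x\}$, so the purely imaginary elements in any such orbit form the unique pair $\{\alpha_1, x(\alpha_1)\}$ (of size one or two).

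The main technical obstacle is the case-by-case verification in the existence step that $(|c_\alpha|,a_\alpha)$ strictly decreases under each of the three reductions, particularly the inequalities $a_\alpha > (2|b_\alpha|+|c_\alpha|)/4$ in case $i=2$ and $|b_\alpha| < a_\alpha$ in case $i=3$. Once this lex-decrease is in hand, the three uniqueness/exclusion assertions collapse to straightforward sign-alternation arguments along the free-product normal form.
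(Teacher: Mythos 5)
Your proposal is correct, and its overall architecture (reach a totally negative element via Lemma \ref{positive}, run a well-founded descent until the whole quadruplet is totally negative or a purely imaginary element appears, then get uniqueness from the sign alternation forced by parts (2) and (3) of Lemma \ref{negative quadrable} along a reduced word in $C_2\ast C_4$) matches the paper's proof. The one genuine difference is the descent invariant. The paper descends on a single quantity: it first shows $|a_\beta|<|a_{y^i(\beta)}|$ for every totally positive $\beta$ and $i=1,2,3$, and then, since $|a_{x(\al)}|=|a_\al|$, the successive totally positive representatives $\al_{j+1}=y^i x(\al_j)$ satisfy $|a_{\al_1}|>|a_{\al_2}|>\cdots$, so the process halts with no case analysis. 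You instead descend lexicographically on $(|c_\al|,a_\al)$ of the totally negative representatives, which forces the three explicit computations of $c_{xy^i(\al)}$; these all check out (note that in your cases $i=2,3$ the negativity of $2(2a+b+c)$ and of $4a+4b+c$, which you need before taking absolute values, comes from Lemma \ref{a,n,b,c}(ii) applied to $y^i(\al)$ together with the total positivity of $y^i(\al)$ --- worth saying explicitly). Your uniqueness argument is essentially the paper's walk between quadruplets, but phrased more crisply via the normal form $y^{i_k}xy^{i_{k-1}}\cdots xy^{i_0}$; this formulation also makes the ``not both'' clause and the purely imaginary case (forcing $h\in\{1,x\}$) cleaner than the paper's somewhat informal ``no return to a predecessor quadruplet'' narrative. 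In short: the paper's descent is lighter on computation; your normal-form uniqueness is the more rigorous of the two write-ups.
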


\begin{proof}
We first show that for any totally positive $\beta\in \Q^*(\sqrt{-n})$, $|a_{\beta}|<|a_{y^i(\beta)}|$ for all $i=1,2,3$. If $a_{\beta}, c_{\beta}>0$ (the case $a_\beta, c_\beta <0$ is similar), then (by Lemma \ref{table}) $$|a_{y(\beta)}| =|-a_{\beta}-c_\beta|=a_\beta+c_\beta>a_\beta=|a_\beta|.$$ In a similar manner we prove that $|a_{y^2(\beta)}|>|a_\beta|$ and $|a_{y^3(\beta)}|>|a_\beta|$.

Let $\al^H$ be an orbit in $\Q^*(\sqrt{-n})$ and, by Lemma \ref{positive}, let $\al_1\in \al^H$ be totally positive. So, by Lemma \ref{negative quadrable}, $x(\al_1)$ is totally negative. If $Q_{x(\al_1)} \in TN(-n)$, then we stop. Otherwise, for some $i=1,2,3$, $\al_2 = y^i x(\al_1)$ is either purely imaginary or totally positive. If $\al_2$ is purely imaginary, then $x(\al_2)$ is purely imaginary too (Lemma \ref{negative quadrable}) and we stop. Otherwise, $\al_2$ is totally positive and so $x(\al_2)$ is totally negative. If $Q_{x(\al_2)} \in TN(-n)$, then we stop. Else, for some $i=1, 2,3$, $\al_3=y^ix(\al_2)$ is either purely imaginary or totally positive. We repeat for $\al_3$ what we did for $\al_1$ and $\al_2$ to get either a pair of purely imaginary elements, a totally negative quadruplet, or carry on to get a totally positive $\al_4=y^ix(\al_3)$ for some $i=1,2,3$. We continue in this manner and suppose that no pair of purely imaginary elements pups up in the process, so we have a sequence of totally positive elements $\al_1, \al_2, \al_3, \dots$ such that, for every $j=1,2,3, \dots$, $\al_j = y^ix(\al_{j-1})$ for some $i=1,2,3$. By the argument at the beginning of this proof, we see that if $\al_2=yx(\al_1)$, then $y^3(\al_2)=x(\al_1)$ and so $$|a_{\al_2}|< |a_{y^3(\al_2)}|=|a_{x(\al_1)}|=|a_{\al_1}|.$$
In a similar manner, we have $|a_{\al_2}| < |a_{\al_1}|$ if $\al_2=y^2x(\al_1)$ or $\al_2=y^3x(\al_1)$. Doing the same for each $j=1, 2,3, \dots$, we have $$|a_{\al_1}|>|a_{\al_2}|>|a_{\al_3}|> \dots$$
Since $\{|a_{\al_i}|\}$ is a decreasing sequence of positive integers, it must terminate at some $|\al_k|$, say. We notice that $x(\al_k)$ is totally negative. If, for some $i=1, 2, 3$, $y^ix(\al_k)$ is purely imaginary, then we stop. Otherwise, $y^ix(\al_k)$ is totally negative for all $i=1,2,3$, as if any of $y^i x(\al_k)$ is totally positive, then $|a_{y^ix(\al_k)}|<|a_{x(\al_k)}|=|a_{\al_k}|$ contradicting the minimality of $|a_{\al_k}|$. By this we have reached at the totally negative quadruplet $Q_{x(\al_k)}$.

%For $n=2$, the only purely imaginary elements of $\Q^*(\sqrt{-2})$ are $\sqrt{-2}/2$ and $\sqrt{-2}/(-2)$ (see the proof of Lemma \ref{norm zero}). But, by Lemma \ref{fixed}, each of these two elements are fixed by $x$ and, by Lemma \ref{same sign}, each of them lies in a separate orbit. As for the totally negative quadruplet in this case, suppose that $\al=\cfrac{a+\sqrt{-2}}{c}\in \Q^*(\sqrt{-2})$ is such that $Q_\al\in TN(-n)$. Due to the bijection between $S_{a>0}(-2)$ and $S_{a<0}(-2)$, we can assume with no loss of generality that $(a,b,c)\in S_{a>0}(-2)$. By Lemmas \ref{a,n,b,c} and \ref{bound}, $a$ must be equal to 2. Since $c$ is even and $c$ divides $a^2+2=6$, $c\in \{\pm 2, \pm 6\}$. If $c= \pm 2$, then $a\nless |c|$, and if $c=\pm 6$, then $b=\pm 1$ and $a\nless 2|b|$. In any case, this contradicts the assumption that $(a,b,c)\in S_{a>0}(-2)$. Thus, there is no totally negative quadruplet under the action of $H$ on $\Q^*(\sqrt{-2})$.

To complete the proof, we need yet to tackle the uniqueness claim. Suppose that an orbit contains a totally negative quadruplet $Q_\al=\{\al, y(\al), y^2(\al), y^3(\al)\}$. We show that as we depart from $Q_\al$, every quadruplet we get must contain a totally positive element and three totally negative element and, thus, we can never reach another totally negative quadruplet in the orbit. The action of $y$ on $Q_\al$ obviously permutes the elements of $Q_\al$. So the only way to depart from $Q_\al$ is via the action of $x$. If we depart $Q_\al$ from the direction of $\al$, for instance, then we get $x(\al)$, which is then totally positive. Now the quadruplet $Q_{x(\al)}=\{x(\al), yx(\al), y^2x(\al), y^3x(\al)\}$ contains only one totally positive element, namely $x(\al)$, whereas the remaining elements are totally negative. The same scenario exactly occurs if we depart $Q_\al$ from the direction of either of $y^i(\al)$, for $i=1,2,3$, so we keep dealing with $Q_{x(\al)}$ with no loss of generality. Now, to depart from $Q_{x(\al)}$ in order to get another quadruplet, we note that if we depart from the direction of $x(\al)$ then we would land back in $Q_\al$ as $xx(\al)=\al$. So we should consider departing from the direction of $y^ix(\al)$, $i=1,2,3$, to get the totally positive $xy^ix(\al)$. Again, the quadruplet $Q_{xy^ix(\al)}$ contains only one totally positive element, namely $xy^ix(\al)$, whereas the remaining elements are totally negative. Continuing this process with no return from one quadruplet to its predecessor, we can never reach another totally negative quadruplet. This shows the uniqueness of a totally negative quadruplet in a single orbit, if it exists. The uniqueness of a pair of purely imaginary elements in a single orbit, if exists, follows a similar line of argument as above. This completes the proof.
\end{proof}

\vspace{.3cm}
In the following, we denote the set of purely imaginary pairs $\{\al, x(\al)\}$ of elements of $\Q^*(\sqrt{-n})$ by $PI(-n)$ for $n$ even. For $m \in \N$ and $k\in \N\cup \{0\}$ with $k\leq m$, we denote the number of positive divisors of $m$ which do not exceed $k$ by $d_{\leq k}(m)$.

Now, we stat the main theorem of this paper.

%\begin{cor}\label{no of orbits}
%The number of orbits in $\Q^*(\sqrt{-n})$ under the action of $H$ is equal to
%\begin{equation*}
%\left \{ \begin{array} {c@{\quad;\quad}l}
%\vspace{.3cm}
%|TN(-n)| & \mbox{if $n$ is odd} \\

%|PI(-n)|+|TN(-n)| & \mbox{if $n$ is even}
%\end{array} \right.
%\end{equation*}
%\end{cor}

%\begin{proof}
%This follows directly from Proposition \ref{orbit}.
%\end{proof}

\begin{thm}\label{main}
The number of orbits in $\Q^*(\sqrt{-n})$ under the action of $H$ is equal to
\begin{equation*}
\left \{ \begin{array} {c@{\quad;\quad}l}
\vspace{.3cm}
|TN(-n)| & \mbox{if $n$ is odd} \\

|PI(-n)|+|TN(-n)| & \mbox{if $n$ is even}
\end{array} \right.
\end{equation*}
\vspace{.5cm}
%\begin{equation*}
%\left \{ \begin{array} {c@{\quad;\quad}l}
%\vspace{.3cm}
%2 & \mbox{if $n=1$ or $n=2$} \\
%\vspace{.3cm}
%\cfrac{1}{2} \, |S_{a>0}(-n)| & \mbox{if $n$ is odd, $n\neq 1$}\\
%d(\cfrac{n}{2}) + \cfrac{1}{2}\, |S_{a>0}(-n)| & \mbox{if $n$ is even, $n\neq 2$}
%\end{array} \right.
%\end{equation*}
\begin{equation*}
=\left \{ \begin{array} {c@{\quad;\quad}l}
\vspace{.3cm}
2 & \mbox{if $n=1$ or $n=2$} \\
\vspace{.3cm}
\cfrac{1}{2} \, \mathop{\mathlarger{\sum}}_{\substack{i=1,\\ \mbox{\tiny odd}}}^n \left\{d(\cfrac{i^2+n}{2})-2\,d_{\leq \lfloor\frac{i}{2}\rfloor}(\cfrac{i^2+n}{2})-E_i(n)\right\} & \mbox{if $n$ is odd, $n\neq 1$}\\
d(\cfrac{n}{2}) + \cfrac{1}{2}\, \mathop{\mathlarger{\sum}}_{\substack{i=2,\\ \mbox{\tiny even}}}^{n/2} \left\{d(\cfrac{i^2+n}{2})-2\,d_{\leq \frac{i}{2}}(\cfrac{i^2+n}{2})-E_i(n) \right\} & \mbox{if $n$ is even, $n\neq 2$}
\end{array} \right.
\end{equation*}
where $E_i(n)$ is the number of positive even integers $C$ dividing $i^2+n$ and $\cfrac{2(i^2+n)}{C}+C \leq 3i$.
\end{thm}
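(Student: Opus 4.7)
The first equality is an immediate consequence of Proposition~\ref{orbit}: for $n$ odd each orbit is identified with a unique totally negative quadruplet, and for $n$ even each orbit is identified either with a unique totally negative quadruplet or with a unique pair of purely imaginary elements (but not both). The cases $n=1$ and $n=2$ require direct inspection, because the inclusion-exclusion machinery below would either undercount a degenerate quadruplet (for $n=1$) or ignore $x$-fixed purely imaginary elements (for $n=2$). For $n=1$, Lemma~\ref{bound}(i) forces $a=1$ and the only admissible signature is $(1,1,2)$, whose associated element $(1+\sqrt{-1})/(-2)$ is a fixed point of $y$, so $|TN(-1)|=2$. For $n=2$, Lemmas~\ref{a,n,b,c}(i) and~\ref{bound}(ii) together leave no admissible $a$, while the two purely imaginary elements $\pm\sqrt{-2}/2$ are both fixed by $x$, giving $|PI(-2)|=2$.

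For $n\geq 3$, I would reduce the orbit count to counting the signature set $S_{a>0}(-n)$ and, when $n$ is even, the purely imaginary pairs. The Möbius-transformation fixed points $(-1\pm\sqrt{-1})/2$ of $y$ lie only in $\Q^*(\sqrt{-1})$, so for $n\neq 1$ each $Q_\al\in TN_{a>0}(-n)$ consists of four distinct elements. The inequalities defining $TN_{a>0}(-n)$, transported through Lemma~\ref{table}, force each of these four elements to have positive $a$-coordinate, so each contributes a distinct triple to $S_{a>0}(-n)$, yielding $|S_{a>0}(-n)|=4\,|TN_{a>0}(-n)|=2\,|TN(-n)|$; this accounts for the leading $\tfrac{1}{2}$ in the formula. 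For $n$ even with $n\geq 4$, the equation $c=2n/c$ has no integer solution (it would make $n/2$ a perfect square, contradicting square-freeness of $n$), so $x$ pairs the $2d(n/2)$ purely imaginary elements of Lemma~\ref{norm zero} into $d(n/2)$ pairs, giving $|PI(-n)|=d(n/2)$.

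The main work is to count $|S_{a>0}(-n)|$ by summing over admissible $a$ (which, by Lemmas~\ref{a,n,b,c}(i) and~\ref{bound}, ranges over positive integers of the same parity as $n$, bounded by $n$ or $n/2$). Fix such an $a$, set $m=a^2+n$, and note that the map $C\mapsto C/2$ is a bijection from even positive divisors of $m$ onto positive divisors of $m/2$, giving $d(m/2)$ candidate values of $C$. A valid signature requires $C>a$, $2B>a$, and $2B+C>3a$, where $B=m/C$. Writing $V_1,V_2,V_3$ for the sets of even divisors violating each condition in turn, one has $|V_1|=d_{\leq\lfloor a/2\rfloor}(m/2)$ directly; the involution $\sigma(C)=2m/C$ on even divisors of $m$ exchanges $V_1$ and $V_2$, so $|V_2|=|V_1|$; and $|V_3|=E_a(n)$ by definition. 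The main technical obstacle is showing that all three pairwise intersections vanish. Since $C\leq a$ and $2B\leq a$ would give $2m=2BC\leq a^2$, contradicting $2m>a^2$, we get $V_1\cap V_2=\emptyset$. For $V_1\cap V_3$ I would analyze the quadratic $f(C)=C^2-3aC+2m$: its roots (when real) satisfy $C_-+C_+=3a$ and $C_-C_+=2m$, and $f(a)=2n>0$ forces $a\notin[C_-,C_+]$; since $C_+>3a/2>a$, this yields $a<C_-$, so $V_3\subseteq\{C\geq C_-\}\subseteq\{C>a\}$, disjoint from $V_1$. Finally $V_3$ is $\sigma$-invariant (the expression $C+2m/C$ is symmetric under $\sigma$), so $\sigma$ restricts to a bijection $V_2\cap V_3\to V_1\cap V_3$, giving $|V_2\cap V_3|=0$. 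Inclusion–exclusion then yields the per-$a$ count $d(m/2)-2\,d_{\leq\lfloor a/2\rfloor}(m/2)-E_a(n)$; summing over admissible $a$, dividing by $2$, and adding $d(n/2)$ in the even case completes the proof.
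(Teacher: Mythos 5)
Your proposal is correct and follows essentially the same route as the paper: reduce via Proposition \ref{orbit} to counting purely imaginary pairs and totally negative quadruplets, handle $n=1,2$ by hand because of the $y$- and $x$-fixed points, convert $|TN(-n)|$ to $\tfrac12|S_{a>0}(-n)|$, partition $S_{a>0}(-n)$ by the value of $a$ using Lemmas \ref{a,n,b,c} and \ref{bound}, and count each slice by subtracting the three pairwise-disjoint sets of even divisors violating the defining inequalities. The only real divergence is local: where the paper checks disjointness of the two intersections involving the condition $2B+C\leq 3a$ by substituting $C=i-t$ (resp. $2(i^2+n)/C=i-t$) and grinding out $2n+t^2\leq 0$, you instead locate the roots of $f(C)=C^2-3aC+2(a^2+n)$ via $f(a)=2n>0$ to get $V_1\cap V_3=\varnothing$ and then transport this through the involution $C\mapsto 2(a^2+n)/C$ to get $V_2\cap V_3=\varnothing$ for free --- a cleaner argument reaching the same conclusion.
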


\begin{proof}
The expression
\begin{equation*}
\left \{ \begin{array} {c@{\quad;\quad}l}
\vspace{.3cm}
|TN(-n)| & \mbox{if $n$ is odd} \\

|PI(-n)|+|TN(-n)| & \mbox{if $n$ is even}
\end{array} \right.
\end{equation*}
for the number of orbits in $\Q^*(\sqrt{-n})$ follows directly from Proposition \ref{orbit}, based on which we shall derive the second expression.

If $n=1$, the number of orbits in $\Q^*(\sqrt{-1})$ is $|TN(-1)|=2|TN_{a>0}(-1)|$. If $Q_\al \in TN_{a>0}(-1)$, then (as $(a_\al, -b_\al, -c_\al)\in S_{a>0}(-1)$), it follow from Lemma \ref{bound} that $0<a_\al \leq 1$ and so $a_\al=1$. But then $c_\al<0$ is an even divisor of $1^2+1=2$, that is $c_\al= -2$. Thus, $|TN_{a>0}(-1)|=1$ and so $|TN(-1)|=2$ as claimed. Note that this is an exceptional case of signatures in the sense that $|S_{a>0}(-1)|=1$ and not a multiple of 4 as expected. The reason for this is that the totally negative element $\al=(1+\sqrt{-1})/-2$ is fixed by $y$ and thus $Q_\al=\{\al\}$. Similarly, $Q_{\overline{\al}}=\{\overline{\al}\}$. In fact, $\al$ and $\overline{\al}$ are the only elements of $\C$ fixed by $y$, since $z\in \C$ is fixed by $y$ if and only if $-1/2(z+1)=z$, that is $z$ must be a solution of the quadratic equation $2z^2+2z+1=0$.

For the case $n=2$, the number of orbits in $\Q^*(\sqrt{-2})$ is $|PI(-2)|+|TN(-2)|$. Since \linebreak $\sqrt{-2}/c \in \Q^*(\sqrt{-2})$ if and only if $c$ is an even divisor of $2$, the only purely imaginary elements of $\Q^*(\sqrt{-2})$ are $\sqrt{-2}/2$ and $\sqrt{-2}/(-2)$. Note that the pair of purely imaginary elements containing $\sqrt{-2}/2$ is a singleton in the sense that it contains only $\sqrt{-2}/2$. Same thing is said about $\sqrt{-2}/(-2)$. The reason for this is that these two elements are fixed by $x$. In fact, they are the only elements of $\C$ fixed by $x$, since $z\in \C$ is fixed by $x$ if and only if $-1/2z=z$, that is $z$ must be a solution of the quadratic equation $2z^2+1=0$. Since, by Lemma \ref{same sign}, these two elements lie in distinct orbits, we have $|PI(-2)|=2$. On the other hand, if $Q_\al \in TN_{a>0}(-2)$, then (as $(a_\al, -b_\al, -c_\al)\in S_{a>0}(-2)$), it follows from Lemma \ref{bound} that $0<a_\al\leq 1$ and so $a_\al=1$. But then $c_\al<0$ is an even divisor of $1^2+2=3$, which is absurd. Thus, $TN_{a>0}(-2)=\varnothing$ and, therefore, $TN(-2)=\varnothing$. This shows that the number of orbits in $\Q^*(\sqrt{-2})$ is 2.

Now, suppose that $n\not\in\{1,2\}$. Let $Q_\al \in TN_{a>0}(-n)$. Since $Q_\al$ is stable under the action of $<y>$, $S_{a>0}(-n)$ contains the signature of $\al$ if and only if it contains the signatures of $y(\al), y^2(\al),$ and $y^3(\al)$. This shows that we can associate to every $\displaystyle{Q_\al \in TN_{a>0}(-n)}$ four distinct elements of $S_{a>0}(-n)$. Moreover, if $Q_\al, Q_\beta \in TN_{a>0}(-n)$ are not equal, then they are disjoint indeed. Thus, $|S_{a>0}(-n)|=4|TN_{a>0}(-n)|$. Since $|TN(-n)|=2|TN_{a>0}(-n)|$, we get
$|TN(-n)|=\cfrac{1}{2}\,|S_{a>0}(-n)|$. To ease notations in the calculations below, we denote an element $(a,-b,-c)\in S_{a>0}(-n)$ by $(a,B,C)$ (so, $B=-b$ and $C=-c$). Considering this notation, recall that
$$S_{a>0}(-n)=\{(a,B,C)\in \N^3\,|\, \mbox{$C$ is even}, a^2+n=BC, a<2B, a<C, 3a<2B+C\}.$$
Due to Lemmas \ref{a,n,b,c} and \ref{bound}, we partition $S_{a>0}(-n)$ as follows:
\begin{equation*}
S_{a>0}(-n)= \left \{ \begin{array} {c@{\quad;\quad}l}
\vspace{.3cm}
\mathop{\mathlarger{\bigcup}}_{\substack{i=1,\\ \mbox{\tiny odd}}}^{n} S^i_{a>0}(-n) & \mbox{if $n$ is odd, $n\neq 1$} \\
\mathop{\mathlarger{\bigcup}}_{\substack{i=2,\\ \mbox{\tiny even}}}^{n/2} S^i_{a>0}(-n) & \mbox{if $n$ is even, $n\neq 2$},
\end{array} \right.
\end{equation*}
where
$$S^i_{a>0}(-n)=\{(i,B,C)\in \N^3 \,|\, \mbox{$C$ is even}, i^2+n=BC, i<2B, i<C, 3i<2B+C\}.$$
Since the partitioning sets are mutually disjoint, the cardinality of $S_{a>0}(-n)$ is equal to the sum of the cardinalities of its partitioning sets. For a fixed $i$, we see that an element $(i, B,C)\in S^i_{a>0}(-n)$ can also be written in the form $(i,\cfrac{i^2+n}{C}, C)$. So we me write $S^i_{a>0}(-n)$ in the form:
$$S^i_{a>0}(-n)=\{(i,\cfrac{i^2+n}{C},C)\,|\, \mbox{$0<C$ is even}, C|(i^2+n), i<\cfrac{2(i^2+n)}{C}, i<C, 3i<\cfrac{2(i^2+n)}{C}+C\}.$$
On the other hand, we observe that
$$S^i_{a>0}(-n)=S^{i,*}_{a>0}(-n) - \left \{ S^{i, 2B}_{a>0}(-n) \cup S^{i,C}_{a>0}(-n) \cup S^{i, 2B+C}_{a>0}(-n)\right \},$$
where
\begin{align*}
S^{i,*}_{a>0}(-n)&=\left\{\left(i,\cfrac{i^2+n}{C},C\right)\,|\, \mbox{$0<C$ is even}, C|(i^2+n)\right\},\\
S^{i,2B}_{a>0}(-n)&=\left\{\left(i,\cfrac{i^2+n}{C},C\right)\,|\, \mbox{$0<C$ is even}, C|(i^2+n), \cfrac{2(i^2+n)}{C}\leq i \right\}\\
S^{i,C}_{a>0}(-n)&=\left\{\left(i,\cfrac{i^2+n}{C},C\right)\,|\, \mbox{$0<C$ is even}, C|(i^2+n), C\leq i\right\}\\
S^{i, 2B+C}_{a>0}(-n)&=\left\{\left(i,\cfrac{i^2+n}{C},C\right)\,|\, \mbox{$0<C$ is even}, C|(i^2+n), \cfrac{2(i^2+n)}{C}+C\leq 3i\right\}.
\end{align*}
We now claim that the sets $S^{i,2B}_{a>0}(-n)$, $S^{i,C}_{a>0}(-n)$, and $S^{i, 2B+C}_{a>0}(-n)$ are mutually disjoint. Indeed, if $S^{i,2B}_{a>0}(-n)\bigcap S^{i,C}_{a>0}(-n)\neq \varnothing$, then there is an even positive integer $C$ such that $C|(i^2+n)$, $\displaystyle{\cfrac{2(i^2+n)}{C}\leq i}$, and $C\leq i$. This implies that $\cfrac{2(i^2+n)}{i} \leq i$ and so $2(i^2+n)\leq i^2$, which is absurd. Also, if $S^{i,2B}_{a>0}(-n)\bigcap S^{i,2B+C}_{a>0}(-n)\neq \varnothing$, then there is an even positive integer $C$ such that $C|(i^2+n)$, $\cfrac{2(i^2+n)}{C}\leq i$, and $\cfrac{2(i^2+n)}{C}+C\leq 3i$. Let $0\leq t < i$ be such that $\cfrac{2(i^2+n)}{C}+t=i$. Then, $2(i^2+n)+Ct=iC$ and so $C=\cfrac{2(i^2+n)}{i-t}$. Substituting this in the inequality $\cfrac{2(i^2+n)}{C}+C\leq 3i$ yields
\begin{align*}
(i-t)^2+2(i^2+n)&\leq 3i(i-t)\\
i^2-2it+t^2+2i^2+2n &\leq 3i^2-3it\\
2n+t^2 &\leq it \leq 0,
\end{align*}
which is absurd too. Similarly, if $S^{i,C}_{a>0}(-n)\bigcap S^{i,2B+C}_{a>0}(-n)\neq \varnothing$, then there is an even positive integer $C$ such that $C|(i^2+n)$, $C\leq i$, and $\cfrac{2(i^2+n)}{C}+C\leq 3i$. Let $0\leq t <i$ be such that $C+t=i$. Substituting the value of $C$ in the inequality $\cfrac{2(i^2+n)}{C}+C\leq 3i$ yields
\begin{align*}
2(i^2+n)+(i-t)^2&\leq 3i(i-t) \\
2i^2+2n+i^2+t^2-2it &\leq 3i^2-3it\\
2n+t^2&\leq it \leq 0,
\end{align*}
which is absurd as well. By this, the claim is settled. Hence, we have
$$|S^i_{a>0}(-n)=|S^{i,*}_{a>0}(-n)|-|S^{i,2B}_{a>0}(-n)|-|S^{i,C}_{a>0}(-n)|-|S^{i, 2B+C}_{a>0}(-n)|.$$
It is obvious that $|S^{i,*}_{a>0}(-n)|$ is equal to the number of positive {\it even} divisors of $i^2+n$, which is, by elementary number theory, equal to the number of positive divisors of $\cfrac{i^2+n}{2}$. So, \linebreak $|S^{i,*}_{a>0}(-n)| = d\left (\cfrac{i^2+n}{2}\right )$. As for the cardinality $|S^{i,C}_{a>0}(-n)|$, we observe that it is equal to the number of positive even divisors of $i^2+n$ which do not exceed $i$; that is $d_{\leq i}(i^2+n)$. Again, by elementary number theory, $d_{\leq i}(i^2+n) = d_{\leq \frac{i}{2}}(\cfrac{i^2+n}{2})$. On the other hand, it can be easily checked that the map $S^{i,2B}_{a>0}(-n) \to S^{i,C}_{a>0}(-n)$ defined by $\left(i,\cfrac{i^2+n}{C},C\right) \mapsto \left(i, \cfrac{C}{2}, \cfrac{2(i^2+n)}{C}\right)$ is bijective and, thus, $|S^{i,2B}_{a>0}(-n)|=|S^{i,C}_{a>0}(-n)|=d_{\leq \frac{i}{2}}(\cfrac{i^2+n}{2})$. Finally, it is obvious that $|S^{i, 2B+C}_{a>0}(-n)|=E_i(n)$, and by this the proof is complete.
%The last cardinality we need to consider is $|S^{i, 2B+C}_{a>0}(-n)|$. Although we do not have a formula that computes this cardinality, we find some bounds that make finding all elements of $S^{i, 2B+C}_{a>0}(-n)$ an easier brute-force job. Let $\left(i,\cfrac{i^2+n}{C},C\right) \in S^{i, 2B+C}_{a>0}(-n)$. We show first that $i<C<2i$. Since $S^{i, 2B+C}_{a>0}(-n)$ and $S^{i, C}_{a>0}(-n)$ are disjoint, $C>i$. Suppose that $C\geq 2i$. Then $\cfrac{2(i^2+n)}{C}+C \leq 3i$ implies that $\cfrac{2(i^2+n)}{C}+2i\leq 3i$ and so $\cfrac{2(i^2+n)}{C}\leq i$, which is impossible since $S^{i, 2B+C}_{a>0}(-n)$ and $S^{i, 2B}_{a>0}(-n)$ are disjoint. Thus, $C<2i$. % We now show that $C\geq i+3$. Suppose that $C=i+t$ for $t\in \{1,2\}$. Then $\cfrac{2(i^2+n)}{i+t} + i+t \leq 3i$ yields $2i^2+2n+i^2+2ti+t^2 \leq 3i^2 + 3ti$ and so $2n+t^2 \leq ti$. Since $i\leq n$ (by Lemma \ref{bound}), we get $2n+t^2 \leq tn$, which is absurd. Thus, $C\geq i+3$. Similarly, we show that $C\leq 2i-3$. Suppose that $C=2i-t$ for $t\in \{1,2\}$. Then $\cfrac{2(i^2+n)}{2i-t}+2i-t\leq 3i$ yields $2i^2+2n+4i^2+t^2-4ti\leq 6i^2-3ti$ and so $2n+t^2\leq ti\leq tn$, which is absurd too. Thus, $C\leq 2i-3$.
\end{proof}

\begin{cor}\label{transitive}
The action of $H$ on $\Q^*(\sqrt{-n})$ is intransitive for every $n$.
\end{cor}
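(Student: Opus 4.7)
The plan is to sidestep the explicit orbit count in Theorem \ref{main} and argue directly via Lemma \ref{same sign}. It suffices to exhibit two elements of $\Q^*(\sqrt{-n})$ whose denominators have opposite signs; any such pair automatically lies in distinct $H$-orbits, which forces the action to have at least two orbits and hence to be intransitive.

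First I would verify that $\Q^*(\sqrt{-n})$ is non-empty for every square-free positive $n$, by exhibiting a concrete element with positive denominator. When $n$ is odd, take $\al = (1+\sqrt{-n})/2$, so $a = 1$, $c = 2$, and $b = (1+n)/2 \in \Z$. When $n$ is even, take $\al = \sqrt{-n}/2$, so $a = 0$, $c = 2$, and $b = n/2 \in \Z$. In either case $c = 2 > 0$ and all the defining conditions of $\Q^*(\sqrt{-n})$ are satisfied.

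Next I would observe that negating the denominator of any element of $\Q^*(\sqrt{-n})$ produces another element of $\Q^*(\sqrt{-n})$: given $\beta = (a + \sqrt{-n})/c$, the expression $\beta' = (a + \sqrt{-n})/(-c)$ still has even denominator $-c$, and $(a^2 + n)/(-c) = -b \in \Z$. Applied to the $\al$ constructed above, this yields a companion $\al'$ with denominator $-2 < 0$. By Lemma \ref{same sign}, $\al$ and $\al'$ cannot share an $H$-orbit, so $\Q^*(\sqrt{-n})$ breaks into at least two orbits, and the action is intransitive. There is no real obstacle here; the argument is uniform in $n$ and the only point worth checking is the elementary closure of $\Q^*(\sqrt{-n})$ under the sign change $c \mapsto -c$.
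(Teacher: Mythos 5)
Your proof is correct, and it takes a genuinely different and markedly more elementary route than the paper's. The paper obtains intransitivity as a byproduct of the orbit-counting formula in Theorem \ref{main}: it treats $n=1,2$ separately, invokes $d(n/2)\geq 2$ for even $n$, and for odd $n$ exhibits four explicit signatures in $S_{a>0}(-n)$ to force $|TN(-n)|\geq 2$. You bypass all of Section 3 and rest the argument entirely on Lemma \ref{same sign}: the sign change $c\mapsto -c$ (equivalently $\alpha\mapsto -\alpha$, or complex conjugation, which the paper already notes sends $c$ to $-c$) preserves $\Q^*(\sqrt{-n})$, the denominator in the representation $(a+\sqrt{-n})/c$ is well defined and nonzero because $bc=a^2+n>0$, and your witnesses $(1+\sqrt{-n})/2$ for odd $n$ and $\sqrt{-n}/2$ for even $n$ settle nonemptiness; two elements with denominators of opposite signs then lie in distinct orbits. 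Your argument is uniform in $n$, could be stated immediately after Lemma \ref{same sign}, and is the cleaner proof of the bare statement. What the paper's heavier approach buys is quantitative content: explicit distinct totally negative quadruplets (hence explicit orbit representatives) and the tie-in to the counting formula that the surrounding section is devoted to.
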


\begin{proof}
For $n=1$ and $n=2$, the number of orbits is 2 and so the action is intransitive in these two cases. For an even $n$ different from 2, $d(n/2) \geq 2$ and so the action is intransitive in this case as well. If $n$ is odd and different from 1, we can check that
$$\left\{ \left(1, \cfrac{n+1}{2}, 2\right), \left(1,1,n+1\right), \left(n, \cfrac{n+1}{2}, 2n\right), \left(n,n,n+1\right)\right\}\subseteq S_{a>0}(-n).$$
Thus, $|TN(-2)|=\cfrac{1}{2}\,|S_{a>0}(-n)| \geq \cfrac{1}{2}\cdot 4=2$ and, therefore, the action is also intransitive in this case.
\end{proof}

%%%%%%%%%%%%%%%%%%%%%%%%%%%%%%%%%%%%%%%%%%%%%%%%%%%%%%%%%%%%%%%%%%%%%%%%%%

\section{Examples}
To compute the number of orbits based on Theorem \ref{main}, we remark that the first formula (for $n\neq 1,2$) requires exhaustive calculation of the sets $PI(-n)$ and $S_{a>0}(-n)$; whereas the second formula requires the calculation of elements leading to $E_i(n)$. We shall follow both approaches below.

We know that for an even $n$, purely imaginary elements always exist. In the examples below, however, we chose to consider $n=10$ and $n=14$ to illustrate a case for an even $n$ in the former case where totally negative quadruplets do not exist in contrast to the latter case where they exist. On the other hand, we know that for an odd $n$, purely imaginary elements do not exist while totally negative quadruplets always exist. However, we chose to consider below $n=11$ and $n=15$ to illustrate a case for an odd $n$ in the former case where all $E_i(n)$ are zero in contrast to the latter case where not all $E_i(n)$ are zero.

\begin{example}{\underline{$n=10$}}$\\$
\indent By Theorem \ref{main}, the number of orbits under the action of $H$ on $\Q^*(\sqrt{-10})$ is $$d(5)+\cfrac{1}{2}\, |S_{a>0}(-10)|=2+\cfrac{1}{2}\, \mathop{\mathlarger{\sum}}_{\substack{i=2,\\ \mbox{\tiny even}}}^{5} |S^i_{a>0}(-10)|.$$
For $i=2$, $i^2+n=14$ and the only possible values of $C$ are $2$ and $14$. But, if $C=2$ then $i\nless C$ and if $C=14$ then $B=1$ and $i\nless 2B$. So, $|S^2_{a>0}(-10)| =\varnothing$. For $i=4$, $i^2+n=26$ and the only possible values of $C$ are $2$ and $26$. Similar to the argument above, we get $|S^4_{a>0}(-10)|=\varnothing$. Thus, $|S_{a>0}(-10)|=0$ and the number of orbits in $\Q^*(\sqrt{-10})$ is 2.

On the other hand, the other formula given by Theorem \ref{main} gives
$$d(5) + \cfrac{1}{2}\, \mathop{\mathlarger{\sum}}_{\substack{i=2,\\ \mbox{\tiny even}}}^{5} \left\{d(\cfrac{i^2+n}{2})-2\,d_{\leq \frac{i}{2}}(\cfrac{i^2+n}{2})-E_i(n)\right\}.$$
As above, for $i=2$, if $C=2$ then $B=7$ and $2\cdot 7+2=16 \nleq 3\cdot i =6$, and if $C=14$ then $B=1$ and $2\cdot 1 + 14 =16 \nleq 3\cdot i =6$. So $E_2(10)=0$. Similarly, $E_4(10)=0$. Thus, the number of orbits in $\Q^*(\sqrt{-10})$ is equal to
$$2+\cfrac{1}{2}\,\{(d(7)-2d_{\leq 1}(7)-0)+(d(13)-2d_{\leq 2}(13)-0)\}= 2+\cfrac{1}{2}\,\{(2-2)+(2-2)\}=2.$$
\end{example}

\begin{example}{\underline{$n=14$}}$\\$
\indent Similar to the case $n=10$, it can be verified that $S^2_{a>0}(-14)=\{(2,3,6)\}$, \linebreak $S^4_{a>0}(-14)=\{(4,5,6),(4,3,10)\}$, and $S^6_{a>0}(-14)=\{(6,5,10)\}$. So, the number of orbits in $\Q^*(\sqrt{-14})$ under the action of $H$ is equal to
$$d(7)+\cfrac{1}{2}\, |S_{a>0}(-14)|=2+\cfrac{1}{2}\, \mathop{\mathlarger{\sum}}_{\substack{i=2,\\ \mbox{\tiny even}}}^{7} |S^i_{a>0}(-14)|=2+\cfrac{1}{2}\,(1+2+1)=2+2=4.$$

On the other hand, using the other formula given by Theorem \ref{main}, it can be checked that $E_i(14)=0$ for all $i\in \{2,4,6\}$. Thus, the number of orbits in $\Q^*(\sqrt{-14})$ is equal to
\begin{align*}
&d(7) + \cfrac{1}{2}\, \mathop{\mathlarger{\sum}}_{\substack{i=2,\\ \mbox{\tiny even}}}^{7} \left\{d(\cfrac{i^2+n}{2})-2\,d_{\leq \frac{i}{2}}(\cfrac{i^2+n}{2})-E_i(n)\right\}\\
&\qquad =2+\cfrac{1}{2}\,\{(d(9)-2d_{\leq 1}(9)-0)+(d(15)-2d_{\leq 2}(15)-0)+(d(25)-2d_{\leq 3}(25)-0)\}\\
&\qquad= 2+\cfrac{1}{2}\,\{(3-2\cdot 1)+(4-2\cdot 1)+(3-2\cdot 1)\}\\
&\qquad=2+2=4.
\end{align*}
\end{example}

\begin{example}{\underline{$n=11$}}$\\$
\indent By Theorem \ref{main}, the number of orbits under the action of $H$ on $\Q^*(\sqrt{-11})$ is $$\cfrac{1}{2}\, |S_{a>0}(-11)|=\cfrac{1}{2}\, \mathop{\mathlarger{\sum}}_{\substack{i=1,\\ \mbox{\tiny odd}}}^{11} |S^i_{a>0}(-11)|.$$
For $i=1$, $i^2+n=12$ and the only possible values of $C$ are $2, 4, 6,$ and $12$. The corresponding values of $B$ are, respectively, $6, 3, 2,$ and $1$. It can be checked that for every such triplet $(i,C,B)$, the conditions $i<C$, $i<2B$, and $3i<2B+C$ are satisfied. Thus,
$$S^1_{a>0}(-11)=\{(1,6,2), (1,3,4), (1,2,6), (1,1,12)\}.$$
Similarly, the following can be verified:
\begin{align*}
S^3_{a>0}(-11)&=\{(3,5,4), (3,2,10)\},\\
S^5_{a>0}(-11)&=\{(5,6,6), (5,3,12)\},\\
S^7_{a>0}(-11)&=\{(7,6,10), (7,5,12)\},\\
S^9_{a>0}(-11)&=\varnothing,\\
S^{11}_{a>0}(-11)&= \{(11,11,12),(11,6,22)\}.
\end{align*}
Thus, the number of orbits in $\Q^*(\sqrt{-11})$ is $\cfrac{1}{2}\,(4+2+2+2+0+2)=6$.

On the other hand, the other formula given by Theorem \ref{main} gives
$$\cfrac{1}{2}\, \mathop{\mathlarger{\sum}}_{\substack{i=1,\\ \mbox{\tiny odd}}}^{11} \left\{d(\cfrac{i^2+n}{2})-2\,d_{\leq \lfloor\frac{i}{2}\rfloor}(\cfrac{i^2+n}{2})-E_i(n)\right\}.$$
As above, for $i=1$, it can be checked that $2B+C \nleq 3\cdot i=3$ for every possible pair $(B,C)$. So $E_1(11)=0$. It can also be similarly checked that $E_i(11)=0$ for every $i\in\{3,5,7,9,11\}$. Thus, the number of orbits in $\Q^*(\sqrt{-11})$ is equal to
\begin{align*}
&\cfrac{1}{2}\, \{(d(6)-2d_{\leq 0}(6)-0)+(d(10)-2d_{\leq 1}(10)-0)+(d(18)-2d_{\leq 2}(18)-0)\\
& \quad +(d(30)-2d_{\leq 3}(30)-0)+(d(46)-2d_{\leq 4}(46)-0)+(d(66)-2d_{\leq 5}(66)-0)\}
\\&=\cfrac{1}{2}\,\{(4-0)+(4-2)+(6-4)+(8-6)+(4-4)+(8-6)\}=\cfrac{1}{2}\cdot 12=6.
\end{align*}
\end{example}

\begin{example}{\underline{$n=15$}}$\\$
\indent Similar to the case $n=11$, it can be verified that
\begin{align*}
S^1_{a>0}(-15)&=\{(1,8,2), (1,4,4), (1,2,8), (1,1,16)\}\\
S^3_{a>0}(-15)&=\{(3,6,4), (3,4,6), (3,3,8), (3,2,12)\}\\
S^5_{a>0}(-15)&=\{(5,5,8), (5,4,10)\}\\
S^7_{a>0}(-15)&=\{(7,8,8),(7,4,16)\}\\
S^9_{a>0}(-15)&=\{(9,8,12),(9,6,16)\}\\
S^{11}_{a>0}(-15)&= S^{13}_{a>0}(-15)=\varnothing \\
S^{15}_{a>0}(-15)&=\{(15,15,16), (15,8,30)\}.
\end{align*}
Thus, by Theorem \ref{main}, the number of orbits in $\Q^*(\sqrt{-15})$ is
$$\cfrac{1}{2}\, |S_{a>0}(-15)|=\cfrac{1}{2}\, \mathop{\mathlarger{\sum}}_{\substack{i=1,\\ \mbox{\tiny odd}}}^{15} |S^i_{a>0}(-15)|=\cfrac{1}{2}\,(4+4+2+2+2+0+0+2)=8.$$

On the other hand, using the other formula given in Theorem \ref{main}, it can be verified that $\displaystyle{E_i(15)=0}$ for all $i\in \{1,3,5,7,9,11,13\}$. However, for $i=15$, each of the two triplets, and none else, $(15,10,24)$ and $(15,12,20)$ satisfy the condition $2B+C\leq 3i$. So, $E_{15}(15)=2$. Thus, the number of orbits in $\Q^*(\sqrt{-15})$ is equal to
\begin{align*}
&\cfrac{1}{2}\, \{(d(8)-2d_{\leq 0}(8)-0)+(d(12)-2d_{\leq 1}(12)-0)+(d(20)-2d_{i\leq 2}(20)-0)\\
&+ (d(32)-2d_{i\leq 3}(32)-0)+(d(48)-2d_{i\leq 4}(48)-0)=(d(68)-2d_{i\leq 5}(68)-0)\\
&+ (d(92)-2d_{i\leq 6}(68)-0)+(d(120)-2d_{i\leq 7}(120)-2)\}\\
&=\cfrac{1}{2}\,\{(4-0)+(6-2)+(6-4)+(6-4)+(10-8)+(6-6)+(6-6)+(16-12-2)\}\\
&=8.
\end{align*}
\end{example}

\section*{Acknowledgement}
The author would like to express his gratitude to King Khalid University for providing administrative and technical support.

\end{document}